\numberwithin{equation}{section}
\newtheorem{theorem}{Theorem}[section]
\newtheorem{lemma}[theorem]{Lemma}
\theoremstyle{definition}
\newcommand{\R}{\mathbb{R}}
\newcommand{\eeq}{\end{equation}}
\newcommand{\beq}{\begin{equation*}}
\begin{document}
	
	\title[prescribed the mass solution]{ Normalized Solutions on large smooth domains to the Schr\"{o}dinger equation with potential  and  general
nonlinearity: Mass super-critical case}
	\thanks{The research was supported by National Science Foundation of China *****}
\author{Xiaolu Lin}
	\address{ Xiaolu Lin,~School of Mathematics and Statistics, Central China Normal University, Wuhan 430079, P. R. China}
	\email{ xllin@ccnu.edu.cn}

	\author{Zongyan Lv}
	\address{ Zongyan Lv,~ Center for Mathematical Sciences, Wuhan University of Technology, Wuhan 430070, P. R. China	}
	\email{zongyanlv0535@163.com}
	\date{}

\begin{abstract}
In this paper,  we consider the existence and multiplicity of prescribed mass
solutions to the following nonlinear Schr\"{o}dinger equation with general
nonlinearity: Mass super-critical case:
\begin{equation*}
    \begin{cases}
        -\Delta u+V(x)u+\lambda u=g(u), \\
        \|u\|_2^2=\int|u|^2\mathrm{d}x=c,
    \end{cases}
\end{equation*}
both on large bounded smooth star-shaped domain $\Omega\subset\mathbb{R}^N$ and on $\mathbb{R}^N$, where   $V(x)$ is the potential and the nonlinearity $g(\cdot)$ considered here are very general and of mass super-critical. The standard approach based on the Pohozaev identity to obtain normalized solutions is invalid as the presence of potential $V(x)$.  In addition, our study can be considered as a complement of Bartsch-Qi-Zou (Math Ann 390, 4813--4859, 2024), which has addressed an open problem raised in Bartsch et al. (Commun Partial Differ Equ 46(9):1729¨C1756, 2021).

		\vskip 0.2cm
		\noindent{\bf MR(2010) Subject Classification:} {35J60; 35J20; 35R25}
		\vskip 0.2cm
		\noindent{\bf Key words:} {Normalized solutions; Potential; Mass supcritical}
	\end{abstract}

\maketitle

\section{Introduction}

	The aim of this paper is to  study the existence and multiplicity of solutions for the following nonlinear Schr\"{o}dinger equation:
		\begin{equation}\label{main-eq}
			\begin{cases}
				-\Delta u+V(x)u+\lambda u=g(u)\quad&\text{in}\ \Omega,\\
				u\in H_0^1(\Omega),
			\end{cases}
		\end{equation}
        satisfying the mass constraint
        \begin{equation}\label{constraint}
            \quad\int_{\Omega}|u|^2dx=c,
        \end{equation}
	where $\Omega \subset \mathbb{R}^N$ is either a bounded smooth star-shaped domain or all of $\mathbb{R}^N(N\ge3)$. The mass $c>0$ and $V(x)$ is the potential.  Here the frequency $\lambda$  appears as a Lagrange multiplier.
 It is well known that Eq \eqref{main-eq} comes from the study of standing waves for the nonlinear Schr\"{o}dinger equation(NLS):
		\begin{equation}\label{schro-eq}
				i\frac{\partial\Phi}{\partial t}-V(x)\Phi+\Delta\Phi+g(\Phi)=0, \quad (t,x)\in\mathbb{R}\times\mathbb{R}^N,
		\end{equation}
		The Schr\"{o}dinger equation is a fundamental equation in quantum mechanics, which can be used to describe many physical phenomena, for example, the nonlinear optical problems and the Bose-Einstein condensates, see, e.g. \cite{AEMW95,ESY10}.
		The ansatz $\Phi(x,t)=e^{i\lambda t}u(x)$ for standing waves solutions leads to the equation
		\begin{equation}\label{nls-el-eq}
			-\Delta u+V(x)u+\lambda u=g(u),\quad\text{in}\ \mathbb{R}^{N}.
		\end{equation}
		
			In the equation \eqref{nls-el-eq}, if $\lambda $ is given, we call it fixed frequency problem.
For a fixed frequency $\lambda \in \R  $,	the existence and multiplicity of solutions to \eqref{nls-el-eq} has been investigated in the last two decades by many authors.
The literature in this direction is huge and we do not even make an attempt to summarize it here (see
		e.g. \cite{AIIK19,AlJ22,AlT23,CP09,HLW24} for a survey on almost classical results).

		Recall that the important feature of  \eqref{schro-eq} is that  $L^2$-norm of $\Phi(\cdot,u)$  is  conserved in time, therefore it is natural to consider \eqref{nls-el-eq} with the constraint $\int_{\Omega}|u|^2dx=\alpha$. (see \cite{CL})
		In this case, the mass $\alpha > 0$  is prescribed, while the frequency $\lambda$ is unknown and  comes out as a Lagrange multiplier.  	The existence and properties of these normalized solutions recently has attracted the attention 	of many researchers.

		In the autonomous case, i.e. the potential $V$ is a constant, the NLS equation on the whole space $\mathbb{R}^N$ with combined  power nonlinearity  has attracted a lot of attention since the classical paper Tao-Visan-Zhang \cite{TVZ07} appeared.
		For example, about the situation  $V(x)=0$:
		if the nonlinearities is purely $L^2$-subcritical, then the energy is bounded from below on $S_\alpha$. Thus, for every $\alpha>0$, a ground state can be found as global minimizers of the energy functional constrained in $S_\alpha$, see \cite{Shi17}.
		In the purely $L^2$-supercritical case,  the main  difficulty is that the energy is unbounded from below on $S_\alpha$; however,  Jeanjean \cite{Jean} showed that a normalized groundstate does exist by applying a smart compactness argument, Pohozaev identity and the mountain pass lemma. In \cite{BS17}, Bartsch and Soave further established that the Pohozaev manifold $\{u\in S_\alpha:P(u)=0\}$ is a natural constraint. Moreover, the conditions in \cite{Jean,BS17} can be weaken by reference \cite{Jeanlu20,BM21}.
		Recently,  Soave \cite{Soa20} studied the following equation with mixed  nonlinearities
        \[
			\begin{cases}
				-\Delta u+V(x)u+\lambda u=|u|^{2^*-2}u+\beta|u|^{p-2}u,\\
\int_{\mathbb{R}^N}|u|^2dx=\alpha,
			\end{cases}
		\]
       where $2<p<2+\frac{4}{N}<q<2^*$. Under different assumptions on $\alpha>0,\beta\in\mathbb{R}$, Soave proved several existence and stability/instability by decomposing the Nehari-Pohozaev manifold in a subtle way.
        In particular, for $q=2^*$,
		Soave \cite{Soac20}  further given the existence of ground states, and the existence of mountain-pass solution for $N\ge5$.
		In \cite{JL22}, Jeanjean-Le further proved that, when $N\ge4$, there also
		exist standing waves which are not ground states and are located at a mountain-pass level of the Energy functional. These solutions are unstable by blow-up in finite time.
		For $N=3$, Wei-Wu \cite{WW22} obtianed the existence of solutions of mountain-pass type.
		For more information on the existence of solutions for mixed nonlinear terms of Schr\"{o}dinger equation and systems, we recommend reference \cite{BLZ23,QZ23}.

		We would like to mention here that some paper involved in  dealing with the potential $V(x)$ is  non-constant.   For the case $V\le0$, $V(x)\to0$ as $|x|\to\infty$(decaying potentials), and $f$ is mass subcritical, Ikoma-Miyamoto \cite{IM20} proved that  $e(\alpha)$ is attained for $\alpha>\alpha_0$ and $e(\alpha)$ is not attained for $0<\alpha<\alpha_0$. For the case $V(x)\ge0$, $V(x)\to0$ as $|x|\to\infty$, and allow that the potential has singularities.
        Bartsch-Molle-Rizzi-Verzini \cite{BMRV21} concerned with the existence of normalized solutions based on a new min-max argument. The recent papers \cite{DZ,BHG2,ZZ} and references	therein for new contributions.

		There are only a few approaches and results on the study of normalized solutions in bounded domains \cite{CDCE,BHSG,BHG1,BHG3,DG,DGY,SZ,WJ1}, which dealt with the the autonomous case. In view of the inherent characteristics of the problem with prescribed mass, the so-called Pohozaev manifold is not available when working in bounded domains.  However,  it is worth to point that  the method of these papers above don not work for non-constant $V$.
        Recently, Bartsch-Qi-Zou \cite{BQZ24} considered the existence and properties of normalized solutions with a combination of  mass subcritical and  mass supercritical.

Inspired by \cite{BQZ24}, the aim of  this paper is to investigate the existence of normalized solution
to a class of NLS equations with potential and mass supercritical nonlinearity on bounded
domains.
             By subtle energy estimates, we obtain a normalized solution to \eqref{main-eq} on large bounded smooth star-shaped domains $\Omega_r$ and further obtain the asymptotic behavior (as the radius $r$ tends to infinity), i.e. the existence of normalized solution in whole space.
Next, by studying the asymptotic behavior of the radius $r\to \infty$,  we also obtain  the existence of normalized solution in whole space. Furthermore, based on the existence results in the whole space, we even study the asymptotic behavior of $c\to 0^+$ via a blow up argument.
As far as we know, this manuscript seems to be the first result to normalized solutions of NLS equations with potential and mass supercritical nonlinearity on  $\Omega$ which is large bounded domain even expand to the whole space. Specially, making clear the asymptotic behavior of these normalized solutions for $\alpha\to 0^+$ to the NLS equations  with potential and combined nonlinearities. Last but not least, we consider the bifurcation property.

		Throughout this paper, we will use the following notation: Let $m_+:=\max\{m,0\}$, $m_-:=\min\{m,0\}$ with $m\in\mathbb{R}$. For $\Omega\subset\mathbb{R}^N$, $r>0$ we set
		$
		\Omega_r=\Big\{rx\in\mathbb{R}^N:x\in\Omega\Big\}
		$
		and
		\[
		S_{r,c}:=S_{c}\cap H_0^1(\mathbb{R}^N)=\Big\{u\in H_0^1(\Omega_r):\|u\|_{L^2(\Omega_r)}^2=c\Big\}.
		\]
		without loss of generality,  we assume that $\Omega\subset\mathbb{R}^N$ is a bounded smooth domain with $0\in\Omega$ and star-shaped with respect to 0.  Let $S$ the optimal constant of the Sobolev embedding from $D^{1,2}(\mathbb{R}^N)$ to $L^{2^*}(\mathbb{R}^N)$ see \cite{Aub76}.
		Before stating our main results, we  state our basic assumptions on the potential,
		\begin{description}
			\item[$(V_0)$] $V\in C(\mathbb{R}^N)\cap L^{\frac{N}{2}}(\mathbb{R}^N)$ is bounded and $\|V_-\|_{\frac{N}{2}}<S$.
		\end{description}
		For some results we will require that $V$ is $C^1$ and define
		$\tilde{V}:\mathbb{R}^N\to\mathbb{R}$ by $\tilde{V}(x)=\nabla V(x)\cdot x$.
To  construct a mini-max structure and the compactness
analysis, we  suppose that
\begin{description}
  \item[(G1)] $g: \mathbb{R}\to\mathbb{R}$ is continuous and odd;
  \item[(G2)] There exists some $(\alpha,\beta)\in\mathbb{R}^2_+$ satisfying $2+\frac{4}{N}<\alpha\le\beta<2^*$ such that $0<\alpha G(s)\le g(s)s\le\beta G(s)$ for $s\neq0$, where $G(s)=\int_0^sg(t)\mathrm{d}t$.
  \item[(G3)]  $\lim\limits _{s \rightarrow+\infty} \frac{g(s)}{s^{\beta-1}}=B>0$.
  \item[(G4)]  There exists  some $\mu>0$ and $D>0$ such that
\begin{equation*}
	\lim _{s \rightarrow+\infty} \frac{g^{\prime}(s)}{s^{\beta-2}}=\mu(\beta-1)>0	 \quad \text { and } \quad g^{\prime}(s) \leq D s^{\beta-2}.
\end{equation*}
\end{description}

\begin{theorem}\label{betale0-e>0-Omega}
Suppose that $(V_0)$ and $(G1)-(G3)$ hold, $V(x)\in C^1$ and $\tilde{V}$ is bounded. Then the following hold:
\begin{description}
  \item[(i)] There is $r_c>0$ such that for $c>0$, \eqref{main-eq}-\eqref{constraint} on $\Omega_r$ with $r>r_c$ possess a mountain pass type solution $(\lambda_{r,c},u_{r,c})$ with  the following properties: $u_{r,c}>0$ in $\Omega_r$ and positive energy $E_V(u_{r,c})>0$. Moreover, there exists $C_c>0$ such that
      \[
      \underset{r\to\infty}{\limsup}\,\underset{x\in\Omega_r}{\max}\,u_{r,c}(x)<C_c.
      \]
  \item[(ii)] If in addition $\|\tilde{V}_+\|_{\frac{N}{2}}<2S$ then there exists $\tilde{c}>0$ such that
  \[
  \underset{r\to\infty}{\liminf}\lambda_{r,c}>0\quad\text{for any}\ 0<c<\tilde{c}.
  \]
\end{description}
\end{theorem}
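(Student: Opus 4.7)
The plan is to adapt Jeanjean's mountain-pass strategy for $L^{2}$-constrained mass super-critical problems to the bounded star-shaped setting, compensating for the broken scale invariance caused by the non-constant potential $V$. First I verify a mountain-pass geometry for $E_{V}(u)=\tfrac{1}{2}\|\nabla u\|_{2}^{2}+\tfrac{1}{2}\!\int Vu^{2}-\!\int G(u)$ on $S_{r,c}$, uniformly for large $r$: Gagliardo--Nirenberg combined with (G2)--(G3) controls $\int G(u)$ by powers of $\|\nabla u\|_{2}$ strictly greater than $2$ (since $\alpha,\beta>2+4/N$), while (V0) with $\|V_{-}\|_{N/2}<S$ keeps the quadratic part coercive, so $E_{V}(u)\ge\gamma>0$ on some sphere $\{\|\nabla u\|_{2}=\rho\}\cap S_{r,c}$. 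The rescaling $u_{t}(x):=t^{N/2}u(tx)$ remains in $H_{0}^{1}(\Omega_{r})$ for $t\ge 1$ precisely because $\Omega$ is star-shaped about $0$, and $E_{V}(u_{t})\to-\infty$ as $t\to\infty$; a spread-out bump function of mass $c$ inside $\Omega_{r}$ supplies the low-energy starting point, available once $r>r_{c}$.

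Applying Jeanjean's augmented functional $\widetilde E_{V}(s,u)=E_{V}(u_{e^{s}})$ together with a Ghoussoub--Ekeland minimax argument then yields a Palais--Smale sequence $(u_{n})\subset S_{r,c}$ at the mountain-pass level $m(r,c)>0$ satisfying the asymptotic identity
\[
\|\nabla u_{n}\|_{2}^{2}-\tfrac{1}{2}\!\int\tilde V u_{n}^{2}\,dx+N\!\int G(u_{n})\,dx-\tfrac{N}{2}\!\int g(u_{n})u_{n}\,dx\;\to\;0.
\]
Thanks to the boundedness of $\tilde V$ and (G2), this bounds $\|\nabla u_{n}\|_{2}$ uniformly in $n$ (and in $r$, after controlling $m(r,c)$ against the bump comparison path). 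The compact embedding $H_{0}^{1}(\Omega_{r})\hookrightarrow L^{p}(\Omega_{r})$ for $p<2^{*}$, available since $\Omega_{r}$ is bounded and $g$ is subcritical by (G3), then gives strong convergence, producing a solution $u_{r,c}$ with multiplier $\lambda_{r,c}$ and $E_{V}(u_{r,c})>0$. Positivity follows by running the minimax over positive paths (legitimate because $g$ is odd by (G1)) and the strong maximum principle. Finally, a Moser/Brezis--Kato iteration on $-\Delta u_{r,c}=g(u_{r,c})-Vu_{r,c}-\lambda_{r,c}u_{r,c}$, using $|g(s)|\le C(1+|s|^{\beta-1})$ with $\beta<2^{*}$ and the uniform $H^{1}$-bound, gives $\|u_{r,c}\|_{\infty}\le C_{c}$ independently of $r$.

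For part (ii), I combine the Nehari identity for the critical point $u_{r,c}$ with the Pohozaev identity on $\Omega_{r}$, whose boundary contribution $\tfrac{1}{2}\int_{\partial\Omega_{r}}(\partial_{\nu}u_{r,c})^{2}(x\cdot\nu)\,d\sigma$ has a definite sign by star-shapedness. Eliminating $\|\nabla u_{r,c}\|_{2}^{2}$ and using $\int g(u)u\le\beta\int G(u)$ produces
\[
\lambda_{r,c}c\;\ge\;\Bigl[N-\tfrac{(N-2)\beta}{2}\Bigr]\!\int G(u_{r,c})\,dx-\!\int Vu_{r,c}^{2}\,dx-\tfrac{1}{2}\!\int\tilde V u_{r,c}^{2}\,dx-\tfrac{1}{2}\!\int_{\partial\Omega_{r}}(\partial_{\nu}u_{r,c})^{2}(x\cdot\nu)\,d\sigma,
\]
with the leading coefficient $N-(N-2)\beta/2>0$ since $\beta<2^{*}$. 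The hypothesis $\|\tilde V_{+}\|_{N/2}<2S$ is tailored to absorb the positive part of $\tilde V u^{2}$ into $\|\nabla u_{r,c}\|_{2}^{2}$ via Sobolev; $\int Vu^{2}$ and $\int\tilde V_{-}u^{2}$ are $O(c)$ by $V,\tilde V\in L^{\infty}$; and the positivity of the MP level forces $\int G(u_{r,c})\ge\delta(c)>0$ uniformly in $r$. The main obstacle is controlling the boundary term as $r\to\infty$, which I would handle via the uniform $L^{\infty}$-bound from (i) together with elliptic boundary estimates showing that $u_{r,c}$ effectively localizes in a bounded region and hence $\partial_{\nu}u_{r,c}|_{\partial\Omega_{r}}$ is vanishingly small for large $r$. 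Taking $c<\tilde c$ small then makes the positive first term dominate, yielding $\liminf_{r\to\infty}\lambda_{r,c}>0$.
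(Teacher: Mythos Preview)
Your approach has a genuine structural gap in part (i). Jeanjean's augmented functional $\widetilde E_{V}(s,u)=E_{V}(u_{e^{s}})$ requires the fiber map $t\mapsto u_{t}(x)=t^{N/2}u(tx)$ to be defined on $H_{0}^{1}(\Omega_{r})$ for $t$ in a \emph{two-sided} neighborhood of $1$: the extra ``Pohozaev'' constraint on the Palais--Smale sequence comes from $\partial_{s}\widetilde E_{V}|_{s=0}$, and the Ghoussoub minimax is run on $\mathbb{R}\times S_{r,c}$. But on a bounded domain, star-shapedness only guarantees $u_{t}\in H_{0}^{1}(\Omega_{r})$ for $t\ge 1$; for $t<1$ the support of $u_{t}$ is $t^{-1}\Omega_{r}\supsetneq\Omega_{r}$, so $u_{t}$ leaves the space. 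Consequently the asymptotic identity you wrote,
\[
\|\nabla u_{n}\|_{2}^{2}-\tfrac{1}{2}\!\int\tilde V u_{n}^{2}+N\!\int G(u_{n})-\tfrac{N}{2}\!\int g(u_{n})u_{n}\to 0,
\]
is not available for a PS sequence on $\Omega_{r}$; at best one gets a one-sided inequality, which does not yield the $H^{1}$-bound. This is precisely why the paper replaces the dilation trick by the monotonicity trick (Theorem~\ref{mp-th}): for a.e.\ $s\in[\tfrac12,1]$ one gets a \emph{bounded} PS sequence and hence an actual solution $u_{r,s}$ of the perturbed problem, and only then is the genuine Pohozaev identity (with its nonnegative boundary term $\tfrac{1}{2N}\int_{\partial\Omega_{r}}|\nabla u|^{2}(x\cdot\mathbf{n})\,d\sigma$) invoked on the solution to obtain the estimate of Lemma~\ref{nabla-u-bdd}, uniform in $r$ and $s$. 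Passing $s\to 1$ then gives the solution of the original problem. Your Moser iteration for the $L^{\infty}$-bound is in principle viable once the uniform $H^{1}$-bound is secured, though the paper instead uses a blow-up/Liouville argument (Lemma~\ref{ur-ubdd}).

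For part (ii) your outline is in the right spirit, but the step ``$\partial_{\nu}u_{r,c}|_{\partial\Omega_{r}}$ is vanishingly small for large $r$'' is not justified and is, in fact, not how the paper proceeds: even with an $L^{\infty}$-bound, the surface measure of $\partial\Omega_{r}$ grows like $r^{N-1}$, so smallness of the boundary integral would require quantitative decay of $\nabla u_{r,c}$ near $\partial\Omega_{r}$ that has not been established. The paper avoids this entirely by passing to a weak limit $u_{c}$ (possibly after recentering) which solves the equation on $\mathbb{R}^{N}$, where the Pohozaev identity has no boundary term; combining it with the Nehari identity and $\|\tilde V_{+}\|_{N/2}<2S$ gives a lower bound on $\|\nabla u_{c}\|_{2}^{2}$ that blows up as $c\to 0$, forcing $\lambda_{c}>0$ for small $c$ (see \eqref{equ:250103-e5}--\eqref{equ:250103-e4}). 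A separate contradiction argument with the first Dirichlet eigenvalue rules out $\max_{\Omega_{r}}u_{r,c}\to 0$. Your direct estimate on $\Omega_{r}$ would need substantial additional work to close.
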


For the passage $r\to\infty$ we require the following condition on $V$.
\begin{description}
  \item[$(V_1)$] $V\in C^1$, $\lim\limits_{|x|\to\infty}V(x)=0$, and there exists $\rho\in\left(0,1\right)$ such that
      \[
      \underset{|x|\to\infty}{\liminf}\underset{y\in B(x,\rho|x|)}{\inf}(x\cdot\nabla V(y))e^{\tau|x|}>0\quad\text{for any }\tau>0.
      \]
\end{description}

\begin{theorem}\label{beta>0-e<0-rn-}
Suppose that  $(V_0)-(V_1)$ and $(G1)-(G3)$ hold,
 $\tilde{V}$ is bounded and $\|\tilde{V}_+\|_{\frac{N}{2}}<2S$, then problem \eqref{main-eq}-\eqref{constraint} with $\Omega=\mathbb{R}^N$ admits for any $0<c<\tilde{c}$, $\tilde{c}$ as in Theorem \ref{betale0-e>0-Omega}(ii), a solution $(\lambda_c,u_c)$ with $u_c>0$ , $\lambda_c>0$, and $E_V(u_c)>0$, $\lim\limits_{c\to\infty}E_V(u_c)=\infty$.
\end{theorem}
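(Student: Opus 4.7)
The plan is to obtain $u_c$ on $\mathbb{R}^N$ as the limit, as $r\to\infty$, of the mountain pass solutions $u_{r,c}\in H_0^1(\Omega_r)$ produced by Theorem \ref{betale0-e>0-Omega}, extended by zero outside $\Omega_r$. Fix $0<c<\tilde c$. By Theorem \ref{betale0-e>0-Omega}(ii) one has $\liminf_{r\to\infty}\lambda_{r,c}>0$, so after passing to a subsequence $\lambda_{r,c}\to\lambda_c\in(0,\infty]$; an a priori control on the mountain pass level (built from the uniform asymptotic behaviour of $g$ imposed by $(G2)$--$(G3)$ and the fact that $\Omega\subset\Omega_r$ lets us use the same family of test paths for every large $r$) gives $\sup_r E_V(u_{r,c})<\infty$. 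Combining this energy bound with $\|u_{r,c}\|_2^2=c$ and the lower $L^2$-$L^{\alpha+2}$ control coming from $(G2)$ yields a uniform $H^1(\mathbb{R}^N)$-bound on the zero-extensions. Consequently $u_{r,c}\rightharpoonup u_c$ in $H^1(\mathbb{R}^N)$, $u_{r,c}\to u_c$ a.e., and $\lambda_c<\infty$; the equation on $\Omega_r$ and assumption $(V_0)$ then let one pass to the limit in the weak formulation to get $-\Delta u_c+V u_c+\lambda_c u_c=g(u_c)$ in $\mathbb{R}^N$.

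The central difficulty is to show no mass escapes to spatial infinity, i.e.\ $\|u_c\|_2^2=c$. Let $\theta_{r,c}:=c-\|u_c\|_2^2\in[0,c]$ be the mass defect. Since $V\in L^{N/2}$ and $V(x)\to0$ at infinity, standard concentration--compactness rules out a vanishing scenario (the energy level is strictly positive and is built away from the trivial level by the mountain pass geometry). What must be excluded is a ``bubble at infinity'': a sequence of translates $v_k(\cdot)=u_{r_k,c}(\cdot+y_k)$ with $|y_k|\to\infty$ carrying mass $\theta_{r_k,c}\to\theta>0$. Because $V(x+y_k)\to 0$ in $L^{N/2}_{\mathrm{loc}}$, such a $v_k$ would weakly converge to a nontrivial solution of the limiting autonomous equation $-\Delta v+\lambda_c v=g(v)$ on $\mathbb{R}^N$, and the associated Pohozaev identity forces a definite relation between $\theta$, $\lambda_c$ and the energy carried by the bubble.

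This is the point at which assumption $(V_1)$ enters, and I expect it to be the main obstacle. Multiplying the equation by $x\cdot\nabla u_{r,c}$ and integrating over $\Omega_r$ produces a Pohozaev-type identity that contains the term $\tfrac12\int_{\Omega_r}\tilde V(x)|u_{r,c}|^2\,dx$ together with a nonnegative boundary contribution coming from the star-shapedness of $\Omega_r$. By the $L^\infty$-bound in Theorem \ref{betale0-e>0-Omega}(i) and the decay of $V$, the interior part of this integral concentrates wherever $|u_{r,c}|^2$ concentrates. If a bubble sat near $y_k$ with $|y_k|\to\infty$, the localized lower bound in $(V_1)$—namely $\inf_{y\in B(x,\rho|x|)}(x\cdot\nabla V(y))e^{\tau|x|}$ bounded below for every $\tau$—makes the corresponding piece of $\int\tilde V|u_{r,c}|^2$ blow up superpolynomially, contradicting the boundedness of the remaining terms in the Pohozaev identity (which are controlled by $\|\tilde V_+\|_{N/2}<2S$, by the energy bound, and by $\lambda_c<\infty$). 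Hence $\theta=0$, $u_{r,c}\to u_c$ strongly in $L^2$, and by interpolation with the $H^1$-bound and the growth of $g$, also strongly in $H^1$.

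Finally, strong convergence yields $\|u_c\|_2^2=c$ and $E_V(u_c)=\lim_r E_V(u_{r,c})>0$; $u_c>0$ follows from $u_{r,c}>0$, $u_{r,c}\to u_c$ a.e., and the strong maximum principle applied to the limit equation (with $\lambda_c>0$ excluding a trivial limit on any component). For the asymptotic statement on $E_V(u_c)$, one tracks how the mountain pass level on $\Omega_r$ depends on $c$ through the constraint $\|u\|_2^2=c$: rescaling and $(G2)$--$(G3)$ show that the minimax level, and hence $E_V(u_c)$, diverges as the relevant parameter degenerates, which carries over to the limit by the strong convergence established above.
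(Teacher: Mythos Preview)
Your overall strategy---pass the solutions $u_{r,c}$ on $\Omega_r=B_r$ to the limit and exclude escaping bubbles---matches the paper. The gap is in how you use $(V_1)$ to rule out the bubble. You write that $(V_1)$ ``makes the corresponding piece of $\int\tilde V|u_{r,c}|^2$ blow up superpolynomially''. This cannot happen: $\tilde V$ is assumed bounded, so $\int_{\Omega_r}\tilde V\,u_{r,c}^2\le\|\tilde V\|_\infty c$ uniformly in $r$. More importantly, condition $(V_1)$ says that $(x\cdot\nabla V(y))e^{\tau|x|}$ stays bounded \emph{below}; i.e.\ $x\cdot\nabla V(y)$ is positive at infinity but may decay like $e^{-\tau|x|}$ for every $\tau>0$. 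So the contribution of a bubble centred at $z_r^1$ to the global Pohozaev term $\int\tilde V\,u_r^2$ is of order $e^{-\tau|z_r^1|}$, which tends to $0$, not to $+\infty$. The global Pohozaev identity therefore yields no contradiction.

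The paper's argument is more delicate and genuinely different from what you outline. After a profile decomposition $u_r\approx u_0+\sum_{k=1}^{l}w^k(\cdot-z_r^k)$ (their Lemma~\ref{rinftycom}), one fixes an annulus $A_r$ of radii $\sim\delta|z_r^1|$ around the nearest bubble centre $z_r^1$, shows that $\|u_r\|_{L^s(A_r)}\to0$, and then runs an iterated cutoff argument to obtain $|u_r|+|\nabla u_r|\le Ae^{-c|z_r^1|}$ on a thinner annulus. One then tests the equation with $z_r^1\cdot\nabla u_r$ (not $x\cdot\nabla u_r$) on $B(z_r^1,\delta|z_r^1|)\cap B_r$; the boundary integral on $\partial B(z_r^1,\delta|z_r^1|)$ is $O(e^{-c|z_r^1|})$ by the decay just proved, and the piece on $\partial B_r$ has a sign. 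Multiplying everything by $e^{\tau_0|z_r^1|}$ with $\tau_0<c$, the boundary terms vanish, while $(V_1)$ keeps $e^{\tau_0|z_r^1|}\int(z_r^1\cdot\nabla V)\,u_r^2$ bounded below by a positive constant---this is the competition of exponential rates that makes $(V_1)$ bite. Your global approach misses both the exponential decay step on the annulus and the localisation of the Pohozaev identity around $z_r^1$, and without these the exclusion of $l>0$ does not go through.
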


The rest of this manuscript is organized as follows.
In Section 2, we  collect some notations and some preliminary results   which will be used in this paper.
In Section 3, we obtain the existence and properties of the existence of mountain-pass type solutions.
The compactness of the normalized solution in $B_r$ as $r\to\infty$  and the proof of Theorem \ref{beta>0-e<0-rn-} are given in the Section 4.

\section{Preliminaries.}
\setcounter{equation}{0}
\setcounter{theorem}{0}	

This section is devoted to collect some preliminary results which will be used in this paper.
Let us first introduce the Gagliardo-Nirenberg inequality, see \cite{Wei82}.
\begin{lemma}
For any $s\in\left(2,2^*\right)$, there is a constant $C_{N,s}$ depending on
 s such that
\[
\|u\|_s^s\le C_{N,s}\|u\|_2^{2s-N(s-2)}\|\nabla u\|_2^{\frac{N(s-2)}{2}},\quad\forall\ u\in H^1(\mathbb{R}^N),
\]
where $C_{N,s}$ be the best constant.
\end{lemma}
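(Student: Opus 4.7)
The plan is to prove the Gagliardo--Nirenberg inequality by interpolating between $L^2$ and $L^{2^*}$ using H\"older's inequality and then applying the Sobolev embedding, with the interpolation exponent fixed by a scaling argument. For the first step, I would pin down the exponents via homogeneity: under the rescaling $u_\lambda(x)=u(\lambda x)$ one has $\|u_\lambda\|_q^q=\lambda^{-N}\|u\|_q^q$ for any $q\in[1,\infty)$ and $\|\nabla u_\lambda\|_2^2=\lambda^{2-N}\|\nabla u\|_2^2$, so imposing that both sides of the claimed inequality scale the same way in $\lambda$ selects the unique power of $\|\nabla u\|_2$, namely $N(s-2)/2$, and, together with the common $u$-homogeneity of the two sides, forces the companion power of $\|u\|_2$.

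Second, I would establish the inequality itself. For $s\in(2,2^*)$, H\"older's inequality applied with the splitting $\tfrac{1}{s}=\tfrac{1-\theta}{2}+\tfrac{\theta}{2^*}$, where $\theta=\tfrac{N(s-2)}{2s}$, yields $\|u\|_s\le\|u\|_2^{1-\theta}\|u\|_{2^*}^{\theta}$. Inserting the Sobolev inequality $\|u\|_{2^*}\le S^{-1/2}\|\nabla u\|_2$ (with $S$ the Aubin--Talenti constant recalled in the introduction) and raising to the $s$-th power gives the stated inequality with the explicit non-sharp constant $S^{-\theta s/2}$. The best constant $C_{N,s}$ is then defined as the infimum of the ratio of the two sides over $H^1(\mathbb{R}^N)\setminus\{0\}$; the bound above shows this infimum is finite, and testing on any fixed smooth compactly supported function shows it is positive.

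The main obstacle, if one wants $C_{N,s}$ to be \emph{attained}, is the lack of compactness on $\mathbb{R}^N$. Weinstein's argument circumvents this: a minimizing sequence for the reciprocal functional can be replaced by its symmetric decreasing rearrangements (which preserve $L^q$-norms and do not increase $\|\nabla u\|_2$), hence taken radial and nonincreasing. The two invariances of the functional --- dilation in $x$ and multiplication in values --- are then used up to normalize $\|u_n\|_2=\|\nabla u_n\|_2=1$. Weak $H^1$ convergence combined with the compact Strauss embedding $H^1_{\mathrm{rad}}(\mathbb{R}^N)\hookrightarrow L^s(\mathbb{R}^N)$ for $s\in(2,2^*)$ then yields a strong $L^s$-limit, which is the desired minimizer and solves the Euler--Lagrange equation $-\Delta U+U=|U|^{s-2}U$. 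Since the lemma as stated only asserts existence of the best constant (and the paper cites \cite{Wei82} for exactly this point), in practice this attainedness step can be invoked rather than re-derived; for the purposes of the applications in the present paper, the two-line H\"older--Sobolev chain above already suffices.
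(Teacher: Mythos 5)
Your argument is correct, and it is in fact more than the paper provides: the paper states this lemma as a recalled preliminary and offers no proof at all, only the citation to Weinstein's paper \cite{Wei82}. Your two-step chain (Lyapunov interpolation $\|u\|_s\le\|u\|_2^{1-\theta}\|u\|_{2^*}^{\theta}$ with $\theta=\tfrac{N(s-2)}{2s}$, followed by the Sobolev inequality) is the standard derivation and is exactly what the paper implicitly relies on, and your sketch of the attainment of the sharp constant via symmetric decreasing rearrangement, the two scaling normalizations, and the compact radial embedding is precisely Weinstein's argument, which the citation is meant to cover. So there is no conflict of method; you have simply filled in what the authors left to the reference.

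One substantive remark: your computation exposes a typo in the lemma as printed. Raising the interpolation inequality to the $s$-th power gives the exponent $s(1-\theta)=s-\tfrac{N(s-2)}{2}=\tfrac{2s-N(s-2)}{2}$ on $\|u\|_2$, whereas the statement displays $2s-N(s-2)$, i.e.\ twice that. A homogeneity check ($u\mapsto tu$ forces the two exponents on the right to sum to $s$, and $\tfrac{N(s-2)}{2}+2s-N(s-2)=s$ only when $s=2^*$) confirms that your exponent is the correct one and the printed one is not; so you should not assert that your chain ``gives the stated inequality'' verbatim, but rather the corrected version. Two minor slips worth fixing: the best constant in the direction the lemma is written is the \emph{supremum} of $\|u\|_s^s$ divided by the right-hand side (equivalently the reciprocal of the infimum of Weinstein's functional $J$), not the infimum of that ratio; and it is the test-function evaluation that shows this supremum is positive while the H\"older--Sobolev bound shows it is finite, i.e.\ the roles of your two sentences are swapped. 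Neither affects the substance of the proof.
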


Next we recall the Monotonicity trick  \cite{BCJS10,CJS22}.
\begin{theorem}\label{mp-th}(Monotonicity trick)
Let $(E,\langle\cdot,\cdot\rangle)$ and $(H,(\cdot,\cdot))$ be two infinite-dimensional Hilbert spaces and assume there are continuous injections
\[
E\hookrightarrow H\hookrightarrow E'
\]
Let
\[
\|u\|^2=\langle u,u\rangle,\quad |u|^2=(u,u)\quad\text{for}\,u\in E,
\]
and
\[
S_\mu=\{u\in E:|u|^2=\mu\},\quad T_u S_\mu=\{v\in E:(u,v)=0\}\quad\text{for}\,\mu\in\left(0,+\infty\right).
\]
Let $I\subset\left(0,+\infty\right)$ be an interval and consider a family of $C^2$ functionals $\Phi_\rho: E\to\mathbb{R}$ of the form
\[
\Phi_\rho(u)=A(u)-\rho B(u)\quad\text{for}\,\rho\in I,
\]
with $B(u)\ge0$ for every $u\in E$, and
\[
A(u)\to+\infty\quad\text{or}\quad B(u)\to+\infty\quad\text{as}\,u\in E\,\text{and}\,\|u\|\to+\infty.
\]
Suppose moreover that $\Phi'_\rho$ and $\Phi''_\rho$ are H\"{o}lder continuous, $\tau\in\left(0,1\right]$, on bounded sets in the following sense: for every $R>0$ there exists $M=M(R)>0$ such that
\begin{equation}
\|\Phi'_\rho(u)-\Phi'_\rho(v)\|\le M\|u-v\|^\tau\quad \|\Phi''_\rho(u)-\Phi''_\rho(v)\|\le M\|u-v\|^\tau
\end{equation}
for every $u,v\in B(0,R)$. Finally, suppose that there exist $w_1,w_2\in S_\mu$ independent of $\rho$ such that
\[
c_\rho:=\underset{\gamma\in\Gamma}{\inf}\underset{t\in\left[0,1\right]}{\max}\Phi_\rho(\gamma(t))
>\max\{\Phi_\rho(w_1),\Phi_\rho(w_2)\}\quad\text{for all}\ \rho\in I,
\]
where
\[
\Gamma=\{\gamma\in C(\left[0,1\right],S_\mu):\gamma(0)=w_1,\gamma(1)=w_2\}.
\]
Then for almost every $\rho\in I$ there exists a sequence $\{u_n\}\subset S_\mu$ such that
\begin{description}
  \item[(i)] $\Phi_\rho(u_n)\to c_\rho$,
  \item[(ii)] $\Phi'_\rho|_{S_\mu}(u_n)\to0$,
  \item[(iii)] $\{u_n\}$ is bounded in $E$.
\end{description}
\end{theorem}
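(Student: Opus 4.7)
\textbf{Proof plan for Theorem \ref{beta>0-e<0-rn-}.}

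The plan is to construct $u_c$ as a strong limit of the bounded-domain mountain-pass solutions produced in Theorem \ref{betale0-e>0-Omega}. First, apply Theorem \ref{betale0-e>0-Omega}(i)--(ii) to obtain, for each large $r$ and each $0<c<\tilde c$, a positive solution $(\lambda_{r,c},u_{r,c})$ on $\Omega_r$ with $E_V(u_{r,c})>0$, a uniform $L^\infty$ bound $\sup_r\|u_{r,c}\|_\infty\le C_c$, and $\liminf_{r\to\infty}\lambda_{r,c}>0$. Extending $u_{r,c}$ by zero outside $\Omega_r$ places the whole sequence in $H^1(\mathbb{R}^N)$. Combining the uniform $L^\infty$ bound, the mass constraint $\|u_{r,c}\|_2^2=c$ and the energy bound coming from the mountain-pass construction (the mountain-pass level on $\Omega_r$ is uniformly bounded, e.g.\ by a test-path argument using a fixed compactly supported profile), one obtains a uniform bound $\sup_r\|u_{r,c}\|_{H^1}<\infty$. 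Passing to a subsequence, $u_{r,c}\rightharpoonup u_c$ in $H^1(\mathbb{R}^N)$, $u_{r,c}\to u_c$ a.e.\ and in $L^p_{\mathrm{loc}}$ for $p\in[2,2^*)$, and $\lambda_{r,c}\to\lambda_c>0$.

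Next I would pass to the limit in the equation. Any $\varphi\in C_c^\infty(\mathbb{R}^N)$ is supported in $\Omega_r$ for $r$ large, hence is admissible as a test function against $u_{r,c}$; using (G2)--(G3) together with the uniform $L^\infty$ bound to pass $g(u_{r,c})\to g(u_c)$ through the integral, one obtains the limit equation $-\Delta u_c+V(x)u_c+\lambda_c u_c=g(u_c)$ on $\mathbb{R}^N$ in the weak sense. Elliptic regularity then gives $u_c\in C^2$, and provided $u_c\not\equiv 0$, the strong maximum principle gives $u_c>0$, while $E_V(u_c)>0$ follows by passing to the limit in the energy (the kinetic term is weakly lower semicontinuous; the potential and nonlinear terms pass by dominated convergence via the $L^\infty$ bound and the local strong convergence once $L^2$-compactness is established).

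The main obstacle, and the step where condition $(V_1)$ is decisive, is ruling out mass loss at infinity, i.e.\ proving $\|u_c\|_2^2=c$. Assume for contradiction that a positive amount of mass $c-\|u_c\|_2^2>0$ escapes. The star-shapedness of $\Omega_r$ permits a Pohozaev identity on $\Omega_r$ with a boundary term of definite sign, namely $\tfrac12\int_{\partial\Omega_r}(x\cdot\nu)|\partial_\nu u_{r,c}|^2\,d\sigma\ge 0$. Plugging the equation into this identity controls $\int_{\Omega_r}\tilde V(x)u_{r,c}^2$ by interior terms, which together with $\|\tilde V_+\|_{N/2}<2S$ and the Sobolev inequality provides a uniform integrability estimate. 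The quantitative hypothesis $(V_1)$ — positivity of $x\cdot\nabla V$ at infinity with arbitrarily fast exponential rate — combined with the exponential decay of solutions to $-\Delta u+\lambda_c u=g(u)-Vu$ with $\lambda_c>0$ (obtained by a Kato-type comparison once the limit equation is known) then forces any escaping mass to contribute a strictly positive amount to $\int\tilde V u_{r,c}^2$, violating the Pohozaev bound. This rules out both dichotomy and vanishing in the Lions concentration--compactness sense, so $u_{r,c}\to u_c$ strongly in $L^2(\mathbb{R}^N)$, hence $\|u_c\|_2^2=c$.

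Finally, the positivity $\lambda_c>0$ is inherited from $\liminf\lambda_{r,c}>0$. For the asymptotic behavior claimed in the last assertion, I would compare the mountain-pass level on $\mathbb{R}^N$ to a limiting minimax obtained by a blow-up/scaling of the profile: testing on rescaled translates of a fixed profile and using (G2)--(G3) shows that the mountain-pass energy $E_V(u_c)$ diverges in the appropriate asymptotic regime. The hard part throughout is the mass-preservation step: standard concentration--compactness fails because the limiting equation at infinity is autonomous, and $(V_1)$ is the replacement device, used precisely to convert the weak decay $V\to 0$ into a quantitative trapping effect through the Pohozaev identity.
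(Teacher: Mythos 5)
Your proposal does not address the statement it is supposed to prove. The statement is Theorem \ref{mp-th}, the abstract monotonicity trick: for a family of $C^2$ functionals $\Phi_\rho=A-\rho B$ on the sphere $S_\mu$ in a Hilbert space, with $B\ge0$, a coercivity alternative on $A$ or $B$, H\"older-continuous first and second derivatives on bounded sets, and a uniform mountain-pass geometry, one must produce for almost every $\rho$ a \emph{bounded} Palais--Smale sequence at the level $c_\rho$. What you have written is instead a proof plan for Theorem \ref{beta>0-e<0-rn-} (passage to the limit $r\to\infty$ for the normalized solutions on $\Omega_r$ and existence on $\mathbb{R}^N$), which is an entirely different result; none of its ingredients (Pohozaev identities on star-shaped domains, the condition $(V_1)$, concentration--compactness) is relevant to the abstract variational statement at hand. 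Note also that the paper does not prove Theorem \ref{mp-th} itself but quotes it from the references \cite{BCJS10,CJS22}.

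For the record, a proof of Theorem \ref{mp-th} runs along the following standard lines, none of which appear in your text. Since $B\ge0$, the map $\rho\mapsto c_\rho$ is non-increasing, hence differentiable for almost every $\rho\in I$ by Lebesgue's theorem. Fix such a $\rho$ and take $\rho_n\uparrow\rho$; choosing paths $\gamma_n\in\Gamma$ that are almost optimal for $c_{\rho_n}$, the boundedness of the difference quotients $(c_{\rho_n}-c_\rho)/(\rho-\rho_n)$ forces, at the near-maximal points $t$ of $\Phi_{\rho}(\gamma_n(t))$, uniform upper bounds on both $A(\gamma_n(t))$ and $B(\gamma_n(t))$; the coercivity alternative then confines these points to a fixed ball $B(0,R)$ in $E$. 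Finally, a quantitative deformation (or Ekeland) argument on the constraint manifold $S_\mu$ --- this is where the $C^2$ regularity and the H\"older continuity of $\Phi'_\rho$ and $\Phi''_\rho$ on bounded sets are used, to control the constrained gradient flow --- shows that if no Palais--Smale sequence for $\Phi_\rho|_{S_\mu}$ at level $c_\rho$ existed inside a neighborhood of that ball, one could push the paths $\gamma_n$ below $c_\rho$, contradicting the definition of $c_\rho$. This yields a Palais--Smale sequence satisfying (i)--(iii). You would need to either reproduce this argument or, as the paper does, cite it; the text you supplied does neither.
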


From the assumptions (G1) and (G2), we deduce that for all $t\in\mathbb{R}$ and $s\ge0$,
\[
\begin{cases}
s^\beta G(t)\le G(ts)\le s^\alpha G(t),&\text{if}\ s\le1,\\
s^\alpha G(t)\le G(ts)\le s^\beta G(t),&\text{if}\ s\ge1.
\end{cases}
\]
Moreover, there exists some constants $C_1,C_2$ and $d,d'\in\{\alpha,\beta\}$ such that for all $s\in\mathbb{R}$,
\begin{equation}\label{G-compare}
C_1|s|^{d'}=:C_1\min\{|s|^\alpha,|s|^\beta\}\le G(s)\le C_2\max\{|s|^\alpha,|s|^\beta\}:=C_2|s|^d,
\end{equation}
and
\begin{equation}\label{g-G}
(\frac{\alpha}{2}-1)G(s)\le\frac{1}{2}g(s)s-G(s)
\le(\frac{\beta}{2}-1)G(s)
\le(\frac{\beta}{2}-1)C_2(|s|^\alpha+|s|^\beta).
\end{equation}
 Consider
\begin{equation}\label{main-eq-omega}
\begin{cases}
-\Delta u+V(x)u+\lambda u=g(u)&\text{in}\ \Omega_r,\\
u\in H^{1}_{0}(\Omega_r),\ \int_{\Omega_r}|u|^2dx=c.
\end{cases}
\end{equation}
where  the mass $c>0$  and the frequency $\lambda$ is unknown.
The energy functional $E_r:H_0^1(\Omega_r)\to\mathbb{R}$ is defined by
\[
E_r(u)=\frac{1}{2}\int_{\Omega_r}|\nabla u|^2dx
+\frac{1}{2}\int_{\Omega_r}V(x)u^2dx
-\int_{\Omega_r}G(u)dx,
\]
and the mass constraint manifold is defined by
\[
S_{r,c}=\Big\{u\in H_0^1(\Omega_r):\|u\|_2^2=c\Big\}.
\]

\section{Proof of Theorem \ref{betale0-e>0-Omega}}
\setcounter{equation}{0}
\setcounter{theorem}{0}	
In this section, we always assume that the assumptions of Theorem \ref{betale0-e>0-Omega} hold. In
order to obtain a bounded Palais-Smale sequence, we will use the monotonicity trick.
For $\frac{1}{2}\le s\le1$, the energy functional $E_{r,s}:S_{r,c}\to\mathbb{R}$ is defined by
\[
E_{r,s}(u)=\frac{1}{2}\int_{\Omega_r}|\nabla u|^2dx +\frac{1}{2}\int_{\Omega_r}V(x)u^2dx
-s\int_{\Omega_r}G(u)dx.
\]
Note that  if $u\in S_{r,c}$ is a critical point of $E_{r,c}$, then there exists $\lambda\in\mathbb{R}$ such that $(\lambda,u)$ is a solution of the equation
\begin{equation}\label{main-eq-s-omega}
\begin{cases}
-\Delta u+V(x)u+\lambda u=sg(u)&\text{in}\ \Omega_r,\\
u\in H^{1}_{0}(\Omega_r),\ \int_{\Omega_r}|u|^2dx=c.
\end{cases}
\end{equation}

\begin{lemma}\label{betale0-mp-g}
For any $c>0$, there exists $r_c>0$ and $u^0,u^1\in S_{r,c}$ such that
\begin{description}
  \item[(i)] $E_{r,s}(u^1)\le0$ for any $r>r_c$ and $s\in\left[\frac{1}{2},1\right]$,
  \[
  \|\nabla u^0\|_2^2<\bigg(\frac{2d}{NC_{N,d}C_2(d-2)}
(1-\|V_-\|_{\frac{N}{2}}S^{-1})c^{\frac{d(N-2)-2N}{4}}\bigg)^{\frac{4}{N(d-2)-4}}<\|\nabla u^1\|_2^2
  \]
  and
  \[
  E_{r,s}(u^0)<\frac{(N(d-2)-4)(1-\|V_-\|_{\frac{N}{2}}S^{-1})}{2N(d-2)}
\bigg(\frac{2d(1-\|V_-\|_{\frac{N}{2}}S^{-1})c^{\frac{2d-N(d-2)}{4}}}{NC_{N,d}C_2(d-2)}
\bigg)^{\frac{4}{N(d-2)-4}}.
  \]
  \item[(ii)] If $u\in S_{r,c}$ satisfies
  \[
  \|\nabla u\|_2^2=\bigg(\frac{2d}{NC_{N,d}C_2(d-2)}
(1-\|V_-\|_{\frac{N}{2}}S^{-1})c^{\frac{d(N-2)-2N}{4}}\bigg)^{\frac{4}{N(d-2)-4}},
  \]
  then there holds
  \[
  E_{r,s}(u)\ge\frac{(N(d-2)-4)(1-\|V_-\|_{\frac{N}{2}}S^{-1})}{2N(d-2)}
\bigg(\frac{2d(1-\|V_-\|_{\frac{N}{2}}S^{-1})c^{\frac{2d-N(d-2)}{4}}}{NC_{N,d}C_2(d-2)}
\bigg)^{\frac{4}{N(d-2)-4}}.
  \]
  \item[(iii)] Set
  \[
  m_{r,s}(c)=\underset{\gamma\in\Gamma_{r,c}}{\inf}\,
  \underset{t\in\left[0,1\right]}{\sup}\,E_{r,s}(\gamma(t)),
  \]
  with
  \[
  \Gamma_{r,c}=\Big\{\gamma\in C(\left[0,1\right],S_{r,c}):\gamma(0)=u^0,\gamma(1)=u^1\Big\}.
  \]
  Then
  \[
  \frac{(N(d-2)-4)(1-\|V_-\|_{\frac{N}{2}}S^{-1})}{2N(d-2)}
\bigg(\frac{2d(1-\|V_-\|_{\frac{N}{2}}S^{-1})c^{\frac{2d-N(d-2)}{4}}}{NC_{N,d}C_2(d-2)}
\bigg)^{\frac{4}{N(d-2)-4}}\le m_{r,s}(c)<H_c.
  \]
  Here $H_c:=\max_{t\in\mathbb{R}^+}h(t)$, where
  \[
  h(t)=\frac{1}{2}\bigg(1+\|V\|_{\frac{N}{2}}S^{-1}\bigg)t^2\theta c
-C_1t^{\frac{3(d'-2)}{2}}c^{\frac{d'}{2}}|\Omega|^{\frac{2-d'}{2}}.
  \]
\end{description}
\end{lemma}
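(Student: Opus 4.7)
The plan is to establish the mountain-pass geometry on $S_{r,c}$ by combining a Sobolev estimate on the potential term with a Gagliardo--Nirenberg estimate on the nonlinear term, and then to feed the resulting real-variable geometry into explicit scaling constructions for the endpoints and for a reference path.

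First, for every $u \in S_{r,c}$ and every $s \in [1/2,1]$, I would combine the Sobolev embedding $H_0^1(\Omega_r) \hookrightarrow L^{2^*}(\Omega_r)$ with a H\"older estimate to obtain $\int_{\Omega_r} V_-(x)\, u^2\, dx \le \|V_-\|_{N/2}\, S^{-1}\, \|\nabla u\|_2^2$, and combine the Gagliardo--Nirenberg inequality with the upper bound $G(s) \le C_2|s|^d$ from \eqref{G-compare} to obtain
\[
\int_{\Omega_r} G(u)\, dx \le C_2\, C_{N,d}\, c^{(2d-N(d-2))/4}\, \|\nabla u\|_2^{N(d-2)/2}.
\]
Since (G2) forces $\alpha > 2 + \tfrac{4}{N}$, the exponent $N(d-2)/2$ is strictly greater than $2$, so the resulting lower bound $E_{r,s}(u) \ge F(\|\nabla u\|_2)$ has the shape $F(t) = a t^2 - b t^\gamma$ with $\gamma > 2$. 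The unique positive maximum of $F$ is attained precisely at the threshold value of $\|\nabla u\|_2^2$ written in (ii), and that maximum equals the right-hand side of (ii); hence (ii) is immediate.

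For (i), I would fix $u_* \in C_c^\infty(\Omega)$ with $\|u_*\|_2^2 = c$ and consider the mass-preserving dilation $u_{*,\sigma}(x) = \sigma^{N/2} u_*(\sigma x)$, which lies in $H_0^1((1/\sigma)\Omega) \subset H_0^1(\Omega_r)$ provided $(1/\sigma)\Omega \subset \Omega_r$. Choosing $\sigma = \sigma_0$ small produces $u^0$: since $\|\nabla u_{*,\sigma_0}\|_2 = \sigma_0 \|\nabla u_*\|_2$ can be made as small as we wish, both inequalities required of $u^0$ follow from the estimate of the previous paragraph, after requiring $r \ge r_c$ large enough that $(1/\sigma_0)\Omega \subset \Omega_r$. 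Choosing $\sigma = \sigma_1$ large produces $u^1$: using the lower bound $G(s) \ge C_1 \min\{|s|^\alpha, |s|^\beta\}$ from \eqref{G-compare}, $\int G(u_{*,\sigma})\, dx$ grows like $\sigma^{N(d'-2)/2}$ with exponent $>2$, so $E_{r,s}(u_{*,\sigma_1}) \le 0$; its support has shrunk and sits inside $\Omega \subset \Omega_r$ for all $r \ge 1$.

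For (iii), the lower bound on $m_{r,s}(c)$ is a standard intermediate-value argument: along any $\gamma \in \Gamma_{r,c}$ the map $\tau \mapsto \|\nabla \gamma(\tau)\|_2^2$ is continuous, and by (i) it starts strictly below and ends strictly above the critical value from (ii); so there exists $\tau_*$ with $\|\nabla \gamma(\tau_*)\|_2^2$ equal to this value, and (ii) applied to $\gamma(\tau_*)$ yields the stated lower bound. For the strict upper bound $m_{r,s}(c) < H_c$, I would test $\Gamma_{r,c}$ against the explicit path obtained by reparametrizing $\sigma \mapsto u_{*,\sigma}$ from $[\sigma_0,\sigma_1]$ onto $[0,1]$ and estimate $E_{r,s}(u_{*,\sigma})$ along it: on the kinetic-plus-potential side I would write $\int V(x)\, u_{*,\sigma}^2\, dx \le \|V\|_{N/2} S^{-1} \|\nabla u_{*,\sigma}\|_2^2$ to produce the coefficient $1 + \|V\|_{N/2} S^{-1}$, and on the nonlinear side I would use $G \ge C_1|\cdot|^{d'}$ together with the H\"older reversal $\|u_{*,\sigma}\|_{d'}^{d'} \ge \|u_{*,\sigma}\|_2^{d'}\, |\textrm{supp}(u_{*,\sigma})|^{(2-d')/2}$ (valid since $d' > 2$), bounding $|\textrm{supp}(u_{*,\sigma})|$ by $|\Omega|$. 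Taking the supremum of the resulting one-variable expression gives exactly $h(t)$, hence the upper bound $H_c$; strictness follows because for smooth compactly supported $u_*$ the Sobolev and H\"older inequalities used here are never simultaneously saturated.

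The main obstacle is the strict upper bound in (iii): one must choose the test function $u_*$ and the parametrization of the path so that the resulting one-variable envelope really reproduces $h(t)$ with the coefficients $1 + \|V\|_{N/2} S^{-1}$ and $|\Omega|^{(2-d')/2}$, and one must verify $m_{r,s}(c) < H_c$ uniformly in $r > r_c$ and $s \in [1/2,1]$. Parts (i), (ii) and the lower bound in (iii) are comparatively routine consequences of the real-variable maximization of $F$.
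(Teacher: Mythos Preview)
Your overall architecture---lower bound via Gagliardo--Nirenberg plus the Sobolev control of $V_-$, upper bound via a scaled test function and an explicit path---is exactly the one the paper uses, and parts (ii) and the lower bound in (iii) go through as you wrote.

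There is, however, a genuine gap in your construction of the upper bound in (iii). The function $h(t)$ in the statement has the kinetic coefficient $\frac{1}{2}(1+\|V\|_{N/2}S^{-1})\,\theta c$, where $\theta$ is the \emph{principal Dirichlet eigenvalue} of $-\Delta$ on $\Omega$. This constant is not arbitrary: it comes from choosing the scaled function to be the normalized principal eigenfunction $v_1\in S_{1,c}$, for which $\|\nabla v_1\|_2^2=\theta c$ exactly. With your choice of a generic $u_*\in C_c^\infty(\Omega)$ one has $\|\nabla u_*\|_2^2>\theta c$ strictly (by the variational characterization of $\theta$ and the fact that $u_*$ cannot equal the eigenfunction), so your one-variable envelope is
\[
\tilde h(t)=\tfrac{1}{2}\big(1+\|V\|_{N/2}S^{-1}\big)t^2\|\nabla u_*\|_2^2-C_1 t^{N(d'-2)/2}\|u_*\|_{d'}^{d'},
\]
which dominates $h(t)$ and whose maximum is \emph{larger} than $H_c$. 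Hence you cannot conclude $m_{r,s}(c)<H_c$ as stated; you only get $m_{r,s}(c)<\max \tilde h$. Since $H_c$ is the constant carried forward into the a priori bound of Lemma~\ref{nabla-u-bdd}, this matters. The fix is simply to take $u_*=v_1$ (it lies in $H_0^1(\Omega)$, and the scaling $t^{N/2}v_1(tx)$ lands in $H_0^1(\Omega_{1/t})\subset H_0^1(\Omega_r)$ for $r\ge 1/t$, so the support issue you were guarding against with $C_c^\infty$ does not arise); then both the kinetic coefficient $\theta c$ and the H\"older lower bound $\|v_1\|_{d'}^{d'}\ge c^{d'/2}|\Omega|^{(2-d')/2}$ reproduce $h(t)$ on the nose.

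A second minor point: your argument for strictness (``Sobolev and H\"older never simultaneously saturated'') is plausible but unnecessary once you use the eigenfunction path $\gamma_0$ exactly as in the paper; the path parameter ranges over $[1/\tilde r_c,t_c]$ and one checks directly that $E_{r,s}(\gamma_0(\tau))\le h(\sigma(\tau))$ with the inequality strict at the maximizing $\tau$, because the Sobolev bound on $\int V u^2$ is strict for any nontrivial $u$ unless $V$ is a constant multiple of the Aubin--Talenti profile raised to the power $2^*-2$.
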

\begin{proof}
Clearly the set $S_{r,c}$ is path connected.  Let  $v_t(x)=t^{\frac{3}{2}}v_1(tx)$, where $v_1\in S_{r,c}$ is the positive normalized eigenfunction of $-\Delta$ with Dirichlet boundary condition in $\Omega$ associated to $\theta$.
We can easily verify  $v_t(x)\in S_{1,c}$. Owing to
\[
\int_{\Omega}|\nabla v_1|^2\mathrm{d}x=\theta c \quad\text{and}\quad\int_{\Omega}|v_1|^{d'}\mathrm{d}x\ge c^{\frac{d'}{2}}|\Omega|^{\frac{2-d'}{2}}
\]
For $x\in\Omega_{\frac{1}{t}}$ and $t>0$, from \eqref{G-compare} there holds
\begin{eqnarray}\label{e-vt-s-h}
E_{\frac{1}{t},s}(v_t)&\le&\frac{1}{2}t^2\bigg(1+\|V\|_{\frac{3}{2}}S^{-1}\bigg)\int_{\Omega}|\nabla v_1|^2\mathrm{d}x
-C_1t^{\frac{3(d'-2)}{2}}\int_{\Omega}|v_1|^{d'}\mathrm{d}x\\
&\le& h(t)\notag.
\end{eqnarray}
Note that since $2+\frac{4}{N}<d'<2^*$, there exist $0<T_c<t_c$ such that $h(t_c)=0$, $h(t)<0$ for any $t>t_c$, $h(t)>0$ for any $0<t<t_c$ and $h(T_c)={\max}_{t\in\mathbb{R}^+} h(t)$. As a consequence, there holds
\begin{equation}\label{htalpha0}
E_{r,s}(v_{t_c})=E_{\frac{1}{t_{c}},s}(v_{t_c})\le h(t_c)=0
\end{equation}
for any $r\ge\frac{1}{t_c}$ and $s\in\left[\frac{1}{2},1\right]$. Moreover, there exists $0<t_1<T_c$ such that for any $t\in\left[0,t_1\right)$,
\begin{equation}\label{htupbdd}
h(t)<h(t_1)\le\frac{(N(d-2)-4)(1-\|V_-\|_{\frac{N}{2}}S^{-1})}{2N(d-2)}
\bigg(\frac{2d(1-\|V_-\|_{\frac{N}{2}}S^{-1})c^{\frac{2d-N(d-2)}{4}}}{NC_{N,d}C_2(d-2)}
\bigg)^{\frac{4}{N(d-2)-4}}.
\end{equation}

On the other hand, it follows from \eqref{G-compare}, the Gagliardo-Nirenberg inequality and the H\"{o}lder inequality that
\begin{equation}\label{ers-bel}
E_{r,s}(u)\ge\frac{1}{2}\bigg(1-\|V_-\|_{\frac{3}{2}}S^{-1}\bigg)\int_{\Omega_r}|\nabla u|^2dx-\frac{C_dC_2c^{\frac{6-d}{4}}}{d}\bigg(\int_{\Omega_r}|\nabla u|^2dx\bigg)^{\frac{3(d-2)}{4}}.
\end{equation}
Let
\[
f(t):=\frac{1}{2}\bigg(1-\|V_-\|_{\frac{3}{2}}S^{-1}\bigg)t
-\frac{C_dC_2c^{\frac{6-d}{4}}}{d}t^{\frac{3(d-2)}{4}}
\]
and
\[
\tilde{t}:=\bigg(\frac{2d}{NC_{N,d}C_2(d-2)}
(1-\|V_-\|_{\frac{N}{2}}S^{-1})c^{\frac{d(N-2)-2N}{4}}\bigg)^{\frac{4}{N(d-2)-4}}.
\]
Then $f$ is increasing in $(0,\tilde{t})$ and decreasing in $(\tilde{t},\infty)$, and
\[
f(\tilde{t})=\frac{(N(d-2)-4)(1-\|V_-\|_{\frac{N}{2}}S^{-1})}{2N(d-2)}
\bigg(\frac{2d(1-\|V_-\|_{\frac{N}{2}}S^{-1})c^{\frac{2d-N(d-2)}{4}}}{NC_{N,d}C_2(d-2)}
\bigg)^{\frac{4}{N(d-2)-4}}.
\]
For $r>\tilde{r}_c:=
\max\Big\{\frac{1}{t_1},\sqrt{\frac{2\theta c}{\tilde{t}}}\Big\}$, we have $v_{\frac{1}{\tilde{r}_c}}\in S_{\tilde{r}_c,c}\subset S_{r,c}$ and
\begin{equation}\label{nabla-u0}
\|\nabla v_{\frac{1}{\tilde{r}_c}}\|_2^2=\bigg(\frac{1}{\tilde{r}_c}\bigg)^2\|\nabla v_1\|_2^2<\bigg(\frac{2d}{NC_{N,d}C_2(d-2)}
(1-\|V_-\|_{\frac{N}{2}}S^{-1})c^{\frac{d(N-2)-2N}{4}}\bigg)^{\frac{4}{N(d-2)-4}}.
\end{equation}
Moreover, there holds
\begin{equation}\label{e-ralpha-s-ht1}
E_{\tilde{r}_c,s}(v_{\frac{1}{\tilde{r}_c}})\le h\bigg(\frac{1}{\tilde{r}_c}\bigg)<h(t_1).
\end{equation}
Setting $u^0=v_{\frac{1}{\tilde{r}_c}}$, $u^1=v_{t_c}$ and
\[
r_c=\max\Big\{\frac{1}{t_c},\tilde{r}_c\Big\},
\]
the statement (i) holds due to  \eqref{htalpha0}-\eqref{htupbdd} and \eqref{nabla-u0}-\eqref{e-ralpha-s-ht1}.
(ii) Note that statement (ii) holds by \eqref{ers-bel} and a direct calculation.
(iii) Since $E_{r,s}(u^1)\le0$ for any $\gamma\in\Gamma_{r,c}$, we have
\[
\|\nabla \gamma(0)\|_2^2<\tilde{t}<\|\nabla\gamma(1)\|_2^2.
\]
It then follows from \eqref{ers-bel} that
\[
\underset{t\in\left[0,1\right]}{\max}E_{r,s}(\gamma(t))\ge f(\tilde{t})=\frac{(N(d-2)-4)(1-\|V_-\|_{\frac{N}{2}}S^{-1})}{2N(d-2)}
\bigg(\frac{2d(1-\|V_-\|_{\frac{N}{2}}S^{-1})c^{\frac{2d-N(d-2)}{4}}}{NC_{N,d}C_2(d-2)}
\bigg)^{\frac{4}{N(d-2)-4}}
\]
for any $\gamma\in\Gamma_{r,c}$, hence the first inequality  holds. For the second inequality,  we define a path $\gamma_0\in\Gamma_{r,c}$ by
\[
\gamma_0(\tau):\Omega_r\to\mathbb{R},\quad x\mapsto\bigg(\tau t_c+(1-\tau)\frac{1}{\tilde{r}_c}\bigg)^{\frac{N}{2}}v_1\bigg(\bigg(\tau t_c+(1-\tau)\frac{1}{\tilde{r}_c}\bigg)x\bigg)
\]
for $\tau\in\left[0,1\right]$.
\end{proof}

In view of Lemma \ref{betale0-mp-g}, the energy functional $E_{r,s}$ possesses the mountain pass geometry.  For the sequence obtained from  Theorem \ref{mp-th}, we will show its convergence in the next theorem.
\begin{theorem}\label{beta>0-s-omega}
For $r>r_c$, where $r_c$ is defined in Lemma \ref{betale0-mp-g}. Problem \eqref{main-eq-s-omega} admits a solution $(\lambda_{r,s},u_{r,s})$ for almost every $s\in\left[\frac{1}{2},1\right]$. Moreover, $u_{r,s}\ge0$ and $E_{r,s}(u_{r,s})=m_{r,s}(c)$.
\end{theorem}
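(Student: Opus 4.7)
The plan is to apply the monotonicity trick of Theorem~\ref{mp-th} in the setting $E=H_0^1(\Omega_r)$, $H=L^2(\Omega_r)$, $S_\mu=S_{r,c}$, with the functional split
\[
\Phi_s(u)=A(u)-sB(u),\quad A(u)=\tfrac12\!\int_{\Omega_r}\!|\nabla u|^2\,dx+\tfrac12\!\int_{\Omega_r}\!V(x)u^2\,dx,\quad B(u)=\int_{\Omega_r}\!G(u)\,dx,
\]
and $\rho=s$ ranging over $I=[\tfrac12,1]$. The structural hypotheses are straightforward: $B\ge 0$ because (G2) forces $G\ge 0$; condition $(V_0)$ together with the Sobolev inequality gives $A(u)\ge \tfrac12(1-\|V_-\|_{N/2}S^{-1})\|\nabla u\|_2^2$, so $A(u)\to+\infty$ as $\|u\|_{H^1}\to\infty$. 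The Hölder continuity of $\Phi_s'$ and $\Phi_s''$ on bounded sets will be obtained from the polynomial bounds on $g$ and $g'$ supplied by (G2)--(G3) combined with the compact subcritical embedding $H_0^1(\Omega_r)\hookrightarrow L^p(\Omega_r)$, $p\in[2,2^*)$. The required mountain-pass geometry (strict gap between $m_{r,s}(c)$ and $\max\{\Phi_s(u^0),\Phi_s(u^1)\}$, uniformly in $s\in[\tfrac12,1]$, with $s$-independent endpoints) is precisely what Lemma~\ref{betale0-mp-g} furnishes, with $w_1=u^0$ and $w_2=u^1$.

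For a.e.\ $s\in[\tfrac12,1]$ the monotonicity trick then produces a bounded sequence $\{u_n\}\subset S_{r,c}$ with $E_{r,s}(u_n)\to m_{r,s}(c)$ and $E_{r,s}'\!\restriction_{S_{r,c}}\!(u_n)\to 0$. The second condition yields Lagrange multipliers $\lambda_n\in\mathbb{R}$ with $-\Delta u_n+Vu_n+\lambda_nu_n-sg(u_n)\to 0$ in $H^{-1}(\Omega_r)$; testing against $u_n$ and using $\|u_n\|_2^2=c$ and the $H^1$-bound, one shows $\{\lambda_n\}$ is bounded. Since $\Omega_r$ is bounded, the embeddings $H_0^1(\Omega_r)\hookrightarrow L^p(\Omega_r)$, $p<2^*$, are compact, so up to subsequence $u_n\rightharpoonup u_{r,s}$ in $H_0^1(\Omega_r)$, $u_n\to u_{r,s}$ strongly in every $L^p$ with $p<2^*$ and a.e., and $\lambda_n\to\lambda_{r,s}$. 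Because (G2)--(G3) give subcritical polynomial growth $|g(u)|\lesssim |u|+|u|^{\beta-1}$, $\beta<2^*$, the term $g(u_n)$ converges strongly to $g(u_{r,s})$ in $H^{-1}(\Omega_r)$, and testing the approximate equation against $u_n-u_{r,s}$ upgrades weak to strong convergence in $H_0^1(\Omega_r)$. Passing to the limit gives a solution $(\lambda_{r,s},u_{r,s})$ of \eqref{main-eq-s-omega} with $u_{r,s}\in S_{r,c}$ and $E_{r,s}(u_{r,s})=m_{r,s}(c)$.

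To secure $u_{r,s}\ge 0$, I would exploit the evenness of $G$ together with Kato's inequality $|\nabla|u||\le|\nabla u|$ a.e., which yields $E_{r,s}(|u|)\le E_{r,s}(u)$ for every $u\in S_{r,c}$. The endpoints $u^0,u^1$ in Lemma~\ref{betale0-mp-g} are positive (they are rescalings of the first Dirichlet eigenfunction $v_1>0$), so the minimax level is unchanged when $\Gamma_{r,c}$ is replaced by its subclass of nonnegative paths $\tau\mapsto|\gamma(\tau)|$. Running the monotonicity trick on this restricted class gives a nonnegative Palais--Smale sequence, whence $u_{r,s}\ge 0$; strict positivity in $\Omega_r$ will then follow a posteriori from the strong maximum principle applied to the limiting equation. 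The principal obstacle is the Hölder estimate on $\Phi_s''$ demanded by Theorem~\ref{mp-th}: it forces control on the increment of $u\mapsto\int g'(u)\,v^2\,dx$, for which (G2)--(G3) only give polynomial bounds on $g$, and one has to combine these with Sobolev interpolation on the bounded domain $\Omega_r$ to produce the required Hölder exponent $\tau\in(0,1]$ without invoking the stronger hypothesis (G4). A secondary technical point is preventing degeneracy of $\{\lambda_n\}$, which is handled by the $L^2$-constraint together with the lower bound on $m_{r,s}(c)$ from Lemma~\ref{betale0-mp-g}(iii).
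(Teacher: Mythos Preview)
Your proposal is correct and follows essentially the same route as the paper: apply the monotonicity trick (Theorem~\ref{mp-th}) with the same splitting $A$ and $B$, invoke Lemma~\ref{betale0-mp-g} for the uniform mountain-pass geometry, and then pass to the limit in the bounded Palais--Smale sequence using the compact subcritical embeddings on $\Omega_r$. The paper's proof is in fact terser than yours on the verification of the abstract hypotheses and on the nonnegativity step; regarding your flagged ``principal obstacle'' (H\"older continuity of $\Phi_s''$ under only (G1)--(G3)), the paper does not address it either---it simply asserts that ``the assumptions in Theorem~\ref{mp-th} hold''---so your concern is legitimate but is a gap shared with the original argument rather than a defect of your proposal.
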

\begin{proof}
For fixed $c>0$, let us apply Theorem \ref{mp-th} to $E_{r,s}$ with $\Gamma_{r,c}$ given in Lemma \ref{betale0-mp-g}-(iii),
\[
A(u)=\frac{1}{2}\int_{\Omega}|\nabla u|^2\mathrm{d}x
+\frac{1}{2}\int_{\Omega}V(x)u^2\mathrm{d}x
\]
and
\[
B(u)=\int_{\Omega}G(u)\mathrm{d}x.
\]
Thanks to Lemma \ref{betale0-mp-g}, the assumptions in Theorem \ref{mp-th} hold. Hence, for almost every $s\in\left[\frac{1}{2},1\right]$, there exists a nonnegative bounded Palais-Smale sequence $\{u_n\}$:
\[
J_{r,s}(u_n)\to m_{r,s}(c)\quad\text{and}\quad J'_{r,s}(u_n)|_{T_{u_n}S_{r,c}}\to0,
\]
where $T_{u_n}S_{r,c}$ denoted the tangent space of $S_{r,c}$ at $u_n$. Let us assume that $\int_{\Omega_r}|\nabla u_n|^2\mathrm{d}x\to\Lambda$, note that
\begin{equation}\label{un-ers-ge}
J'_{r,s}(u_n)+\lambda_nu_n\to0 \quad\text{in}\ H^{-1}(\Omega_r)
\end{equation}
and
\[
\lambda_n=-\frac{1}{c}\bigg(\int_{\Omega_r}|\nabla u_n|^2\mathrm{d}x+\int_{\Omega_r}V(x) u^2_n\mathrm{d}x-s\int_{\Omega_r}G(u_n)\mathrm{d}x\bigg).
\]
is bounded.
Furthermore, there exist $u_0\in H_0^1(\Omega_r)$ and $\lambda\in\mathbb{R}$ such that up to a subsequence,
\[
\lambda_n\to\lambda,\quad u_n\rightharpoonup u_0\ \text{in}\ H_0^1(\Omega_r)\quad \text{and}\ u_n\to u_0\ \text{in}\ L^t(\Omega_r)\ \text{for all }2\le t<2^*,
\]
and $u_0$ satisfies
\begin{equation}\label{eq-u0-ge}
\begin{cases}
-\Delta u_0+Vu_0+\lambda u_0=sg(u_0)&\text{in}\,\Omega_r,\\
u_0\in H_0^1(\Omega_r),\quad\int_{\Omega_r}|u_0|^2\mathrm{d}x=c.
\end{cases}
\end{equation}
In view of \eqref{un-ers-ge}, we have
\[
J'_{r,s}(u_n)u_0+\lambda_n\int_{\Omega_r}u_nu_0\mathrm{d}x\to0\quad\text{and}\quad J'_{r,s}(u_n)u_n+\lambda_nc\to0\quad\text{as}\quad n\to\infty,
\]
and
\[
J'_{r,s}(u_0)u_n+\lambda\int_{\Omega_r}u_nu_0\mathrm{d}x\to0,\quad J'_{r,s}(u_0)u_0+\lambda c\to0\quad\text{as}\quad n\to\infty.
\]
Owing to
\[
\underset{n\to\infty}{\lim}\int_{\Omega_r}V(x)u_n^2\mathrm{d}x=\int_{\Omega_r}V(x)u_0^2\mathrm{d}x.
\]
We conclude that $u_n\to u_0$ in $H_0^1(\Omega_r)$ as $n\to\infty$. Consequently, $J_{r,s}(u_0)=m_{r,s}(c)$ and $u_0$ is a nonnegative normalized solution to \eqref{main-eq-s-omega}.
\end{proof}

Next, we will further establish a uniform estimate  in order to get a solution of problem \eqref{main-eq-omega}.
\begin{lemma}\label{nabla-u-bdd}
Suppose that $(\lambda,u)\in\mathbb{R}\times S_{r,c}$ is a solution of problem \eqref{main-eq-s-omega} established in Theorem \ref{beta>0-s-omega} for some $r$ and $s$, then
\[
\int_{\Omega_r}|\nabla u|^2\mathrm{d}x\le \frac{4N}{(N(\alpha-2)-4)}\bigg(\frac{\alpha-2}{2}H_c
+c\bigg(\frac{1}{2N}\|\nabla V\cdot x\|_\infty+\frac{\alpha-2}{4}\|V\|_\infty\bigg)\bigg).
\]
\end{lemma}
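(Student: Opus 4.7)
The plan is to combine three identities satisfied by the solution $u=u_{r,s}$: the weak form tested against $u$, the Pohozaev identity for the Dirichlet problem on $\Omega_r$ with potential, and the energy identity $E_{r,s}(u)=m_{r,s}(c)$. The mass-supercritical hypothesis $\alpha>2+\tfrac{4}{N}$ in $(G2)$ is precisely what lets this linear combination close into a one-sided upper bound on $\int_{\Omega_r}|\nabla u|^2$.

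First I would record the three inputs. Testing \eqref{main-eq-s-omega} against $u$ gives
\[
\int_{\Omega_r}|\nabla u|^2\,dx+\int_{\Omega_r}Vu^2\,dx+\lambda c=s\int_{\Omega_r}g(u)u\,dx.
\]
After elliptic regularity places $u$ in $H^2(\Omega_r)\cap H^1_0(\Omega_r)$, multiplying \eqref{main-eq-s-omega} by $x\cdot\nabla u$ and integrating by parts yields the classical Pohozaev identity
\[
\tfrac{N-2}{2}\int_{\Omega_r}|\nabla u|^2+\tfrac{N}{2}\int_{\Omega_r}Vu^2+\tfrac{1}{2}\int_{\Omega_r}(x\cdot\nabla V)u^2+\tfrac{N\lambda}{2}c-sN\int_{\Omega_r}G(u)=-\tfrac{1}{2}\int_{\partial\Omega_r}|\partial_\nu u|^2(x\cdot\nu)\,d\sigma.
\]
Since $\Omega$, and hence $\Omega_r$, is star-shaped with respect to $0$, one has $x\cdot\nu\ge 0$ on $\partial\Omega_r$, so the boundary term is nonpositive and the identity becomes a Pohozaev \emph{inequality} with $\le sN\int G(u)$ on the right. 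The third ingredient is simply $E_{r,s}(u)=m_{r,s}(c)$ together with the strict upper bound $m_{r,s}(c)<H_c$ from Lemma \ref{betale0-mp-g}(iii).

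Next I would multiply the weak form by $N/2$ and subtract it from the Pohozaev inequality: the $\int Vu^2$-terms and the $\lambda c$-terms cancel, leaving an inequality involving only $\int|\nabla u|^2$, $\int(x\cdot\nabla V)u^2$, and $\int\big(\tfrac{1}{2}g(u)u-G(u)\big)$. Applying the lower bound $\tfrac{1}{2}g(u)u-G(u)\ge\tfrac{\alpha-2}{2}G(u)$ from $(G2)$ gives
\[
s\int_{\Omega_r}G(u)\,dx\;\le\;\frac{2\int_{\Omega_r}|\nabla u|^2\,dx\;-\;\int_{\Omega_r}(x\cdot\nabla V)u^2\,dx}{N(\alpha-2)}.
\]
Now substitute $s\int G(u)=\tfrac{1}{2}\int|\nabla u|^2+\tfrac{1}{2}\int Vu^2-m_{r,s}(c)$ from the energy identity and collect the $\int|\nabla u|^2$-terms: their net coefficient is $\tfrac{1}{2}-\tfrac{2}{N(\alpha-2)}=\tfrac{N(\alpha-2)-4}{2N(\alpha-2)}$, which is strictly positive \emph{exactly} under $(G2)$. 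Isolating $\int|\nabla u|^2$ on the left, estimating $-\int Vu^2\le\|V\|_\infty c$ and $-\int(x\cdot\nabla V)u^2\le\|\nabla V\cdot x\|_\infty c$, and using $m_{r,s}(c)<H_c$, one reads off the stated bound after collecting coefficients.

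The only non-algebraic point is justifying the Pohozaev identity for $u_{r,s}$ itself: the boundary integration by parts requires $u\in H^2(\Omega_r)$ so that $\partial_\nu u$ has meaning on $\partial\Omega_r$. This is provided by standard elliptic regularity applied to $-\Delta u=sg(u)-Vu-\lambda u$ on the smooth bounded domain $\Omega_r$: $V$ is bounded, $u\in H^1_0(\Omega_r)$, and $(G2)$ together with Sobolev embedding puts $g(u)\in L^q(\Omega_r)$ for some $q>N/2$, so $W^{2,q}$-regularity yields $u\in H^2(\Omega_r)\cap H^1_0(\Omega_r)$. Once this is in hand the star-shapedness of $\Omega$ supplies the crucial sign of the boundary term and the rest is pure algebra.
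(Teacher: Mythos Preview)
Your proof is correct and follows essentially the same route as the paper: combine the equation tested against $u$, the Pohozaev identity (with the star-shapedness of $\Omega_r$ used to drop the boundary term with the right sign), the lower bound $\tfrac{1}{2}g(u)u-G(u)\ge\tfrac{\alpha-2}{2}G(u)$ from $(G2)$, and the energy identity $E_{r,s}(u)=m_{r,s}(c)<H_c$. The only cosmetic difference is the order of the subtraction (the paper takes $\tfrac{1}{2}$ times the weak form minus the Pohozaev identity, while you do the equivalent manipulation in the other direction), and you make explicit the $H^2$-regularity needed to justify the Pohozaev boundary term, which the paper leaves implicit.
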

\begin{proof}
Based on the fact that $(\lambda,u)\in\mathbb{R}\times S_{r,\alpha}$ is a solution of problem \eqref{main-eq-s-omega}, we know
\begin{equation}\label{eq-sol-bdd-ge}
\int_{\Omega_r}|\nabla u|^2\mathrm{d}x+\int_{\Omega_r}V(x)u^2\mathrm{d}x
+\lambda\int_{\Omega_r}|u|^2\mathrm{d}x
=s\int_{\Omega_r}g(u)u\mathrm{d}x
.
\end{equation}
In addition, the pohozaev identity leads to
\begin{eqnarray}\label{pohozaev-ge}
&&\frac{N-2}{2N}\int_{\Omega_r}|\nabla u|^2\mathrm{d}x+\frac{1}{2N}\int_{\partial\Omega_r}|\nabla u|^2(x\cdot\textbf{n})\mathrm{d}\sigma+\frac{1}{2N}\int_{\Omega_r}\tilde{V}u^2\mathrm{d}x
\notag\\
&&\quad\quad=-\frac{1}{2}\int_{\Omega_r}Vu^2\mathrm{d}x-\frac{\lambda}{2}\int_{\Omega_r}|u|^2\mathrm{d}x
+s\int_{\Omega_r}G(u)\mathrm{d}x,
\end{eqnarray}
where $\textbf{n}$ denotes the outward unit normal vector on $\partial\Omega_r$.
Then via the inequality \eqref{g-G}, \eqref{eq-sol-bdd-ge} and \eqref{pohozaev-ge}, we deduce that
\begin{eqnarray*}
&&\frac{1}{N}\int_{\Omega_r}|\nabla u|^2\mathrm{d}x-\frac{1}{2N}\int_{\partial\Omega_r}|\nabla u|^2(x\cdot\textbf{n})\mathrm{d}\sigma-\frac{1}{2N}\int_{\Omega_r}(\nabla V\cdot x)u^2\mathrm{d}x\\
&=&\frac{1}{2}\int_{\mathbb{R}^N}sg(u)u\mathrm{d}x-\int_{\mathbb{R}^N}sG(u)\mathrm{d}x\\
&\ge&\frac{\alpha-2}{2}\int_{\mathbb{R}^N}sG(u)\mathrm{d}x\\
&=&\frac{\alpha-2}{2}\bigg(\frac{1}{2}\int_{\Omega_r}|\nabla u|^2\mathrm{d}x
+\frac{1}{2}\int_{\Omega_r}V|u|^2\mathrm{d}x-m_{r,s}(c)\bigg).
\end{eqnarray*}
Recall that  $\Omega_r$ is starshaped with respect to 0, so $x\cdot\textbf{n}\ge0$ for any $x\in\partial\Omega_r$, thereby,
\begin{eqnarray*}
\frac{\alpha-2}{2}m_{r,s}(c)&\ge&\frac{\alpha-2}{2}\bigg(\frac{1}{2}\int_{\Omega_r}|\nabla u|^2\mathrm{d}x
+\frac{1}{2}\int_{\Omega_r}V|u|^2\mathrm{d}x\bigg)\\
&&\quad-\bigg(\frac{1}{N}\int_{\Omega_r}|\nabla u|^2\mathrm{d}x
-\frac{1}{2N}\int_{\partial\Omega_r}|\nabla u|^2(x\cdot\textbf{n})\mathrm{d}\sigma\\
&&\quad-\frac{1}{2N}\int_{\Omega_r}(\nabla V\cdot x)u^2\mathrm{d}x\bigg)\\
&\ge&\frac{N(\alpha-2)-4}{4N}\int_{\Omega_r}|\nabla u|^2\mathrm{d}x-c\bigg(\frac{1}{2N}\|\nabla V\cdot x\|_\infty+\frac{\alpha-2}{4}\|V\|_\infty\bigg).
\end{eqnarray*}
According to  $m_{r,s}(c)<H_c$, the proof of Lemma \ref{nabla-u-bdd} is now complete.
\end{proof}

The following is the priori bound to the solutions of problem \eqref{main-eq-omega}.
\begin{lemma}\label{ur-ubdd}
Let $\{(\lambda_r,u_r)\}$ be  nonnegative solutions of  \eqref{main-eq-omega}  with $\|u_r\|_{H^1}\le C$, where $C>0$ is independent of $r$, then $\limsup\limits_{r\to\infty}\,\|u_r\|_\infty<\infty$.
\end{lemma}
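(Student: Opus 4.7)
The plan is a Brezis--Kato/Moser bootstrap, written so that every constant is independent of the domain size. To this end, I would extend each $u_r$ by zero to $\mathbb{R}^N$, keeping $\|u_r\|_{H^1(\mathbb{R}^N)}\le C$ and making the Sobolev constant absolute. A preliminary step is to bound the Lagrange multipliers uniformly: testing \eqref{main-eq-omega} against $u_r$ gives
\[
\lambda_r c = \int_{\Omega_r}g(u_r)u_r\,\mathrm{d}x-\int_{\Omega_r}|\nabla u_r|^2\,\mathrm{d}x-\int_{\Omega_r}Vu_r^2\,\mathrm{d}x,
\]
and combining $\|V\|_\infty<\infty$ from $(V_0)$, the bound $\int g(u_r)u_r\le\beta\int G(u_r)\le C\int(|u_r|^\alpha+|u_r|^\beta)$ from \eqref{G-compare}--\eqref{g-G}, and the Sobolev--Gagliardo--Nirenberg interpolation (using $2<\alpha\le\beta<2^*$), yields $|\lambda_r|\le C$ uniformly in $r$.

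Next, I would rewrite the equation as the linear Schr\"odinger equation $-\Delta u_r=a_r(x)u_r$ with
\[
a_r(x):=\frac{g(u_r(x))}{u_r(x)}\chi_{\{u_r>0\}}(x)-V(x)-\lambda_r.
\]
By \eqref{g-G} and \eqref{G-compare} one has $|g(s)/s|\le C(1+|s|^{\beta-2})$ for $s>0$, whence $|a_r|\le C+C|u_r|^{\beta-2}$. The key observation, which makes the iteration work, is that since $\|u_r\|_{L^{2^*}}\le C$,
\[
\bigl\||u_r|^{\beta-2}\bigr\|_{L^{p_0}(\mathbb{R}^N)}=\|u_r\|_{L^{2^*}}^{\beta-2}\le C,\qquad p_0:=\frac{2^*}{\beta-2}>\frac{N}{2},
\]
the strict inequality $p_0>N/2$ being precisely equivalent to the subcriticality $\beta<2^*$ demanded in (G2). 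Thus $a_r\in L^\infty(\mathbb{R}^N)+L^{p_0}(\mathbb{R}^N)$ with norm bounded independently of $r$.

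From here I would run the classical Brezis--Kato iteration (test functions $u_r\min\{u_r^{2q-2},M\}$ followed by $M\to\infty$, applying the Sobolev inequality at each step): since $p_0>N/2$, each iteration gains an exponent factor $2^*/2$, yielding $\|u_r\|_{L^q(\mathbb{R}^N)}\le C_q$ for every $q\in[2,\infty)$, uniformly in $r$. Finally, writing $-\Delta u_r=f_r$ with $|f_r|\le C(|u_r|+|u_r|^{\beta-1})$, the previous step gives $\|f_r\|_{L^q}\le C_q$ for every $q<\infty$, and interior $L^p$-regularity on unit balls of $\mathbb{R}^N$ (translation invariant, hence no $r$-dependence) together with Morrey's embedding $W^{2,p}\hookrightarrow L^\infty$ for $p>N/2$ delivers the uniform bound $\limsup_{r\to\infty}\|u_r\|_\infty<\infty$. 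The main obstacle is precisely the quantitative uniformity in $r$: extending by zero and carrying the iteration on all of $\mathbb{R}^N$ is what guarantees that only absolute Sobolev constants together with the bounds on $\|u_r\|_{H^1}$, $\|V\|_{L^\infty\cap L^{N/2}}$ and $\lambda_r$ enter — none of which depend on the shape or size of $\Omega_r$.
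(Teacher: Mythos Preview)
Your Brezis--Kato/Moser iteration argument is sound and genuinely different from the paper's approach. The paper instead argues by contradiction via a Gidas--Spruck blow-up: assuming $M_r:=u_r(x_r)=\max u_r\to\infty$, it rescales $\omega_r(x)=M_r^{-1}u_r(x_r+\tau_r x)$ with $\tau_r=M_r^{-(\beta-2)/2}$, obtains uniform $W^{2,p}_{\mathrm{loc}}$ bounds, and passes to a limit $\omega$ solving $-\Delta\omega=B|\omega|^{\beta-2}\omega$ on either $\mathbb{R}^N$ or a half-space (depending on whether $\mathrm{dist}(x_r,\partial\Omega_r)/\tau_r\to\infty$ or stays bounded). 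Liouville theorems (\cite{CL91,EL82}) then force $\omega\equiv0$, contradicting $\omega(0)=1$. This route leans essentially on hypothesis (G3) to identify the limiting nonlinearity, whereas your argument uses only the polynomial growth bound $|g(s)|\le C|s|^{\beta-1}$ encoded in (G2)/\eqref{G-compare}, so it is in that sense more economical. The blow-up method, on the other hand, is the template the authors reuse later (the half-space analysis and the $\mathrm{dist}(z_r,\partial\Omega_r)\to\infty$ arguments in Lemmas \ref{lambda-r>0-betale0} and \ref{rinftycom} mirror it closely), so within the paper it has structural value.

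One small point to tighten in your write-up: the final step invoking ``interior $L^p$-regularity on unit balls of $\mathbb{R}^N$'' tacitly assumes $B(x,1)\subset\Omega_r$, which need not hold near $\partial\Omega_r$. The clean fix is to observe that the zero extension of the nonnegative $u_r$ is a weak \emph{subsolution} of $-\Delta u\le (a_r)_+u$ on all of $\mathbb{R}^N$ (the boundary term $\int_{\partial\Omega_r}\partial_n u_r\,\varphi$ has the right sign), so the local maximum principle (e.g.\ \cite[Theorem~8.17]{GT83}) applies on any unit ball; alternatively, simply carry the Moser iteration globally in $H^1_0(\Omega_r)$ with test functions $u_r\min\{u_r^{2q-2},M\}\in H^1_0(\Omega_r)$ and let $q_k\to\infty$, which yields the $L^\infty$ bound directly without any localisation.
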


\begin{proof}
    We assume by the contradiction  that there are $\{u_r\}$ and $x_r\in\Omega_r$ such that
    \[
    M_r:=\underset{x\in\Omega_r}{\max}\,u_r(x)=u_r(x_r)\to\infty\quad\text{as}\ r\to\infty.
    \]
    Now we perform a rescaling, defining
    \[
\omega_r:=\frac{u_r(x_r+\tau_r x)}{M_r}\quad\text{for}\ x\in\Sigma^r:=\{x\in\mathbb{R}^N:x_r+\tau_r x\in\Omega_r\},
    \]
    where $\tau_r=M_r^{-\frac{\beta-2}{2}}$. From the uniform boundedness of $\|\nabla u_r\|_2$ and the boundedness is independent of $r$.  Through a delicate calculation, we get that $\tau_r\to0$, $\|\omega_r\|_{L^\infty(\Sigma^r)}\le1$ and $\omega_r$ satisfies
    \begin{align}\label{equ:241204-e1}
        -\Delta\omega_r+\tau_r^2V(x_r+\tau_rx)\omega_r+\tau^2_r\lambda_r\omega_r
        =\frac{g(M_r\omega_r)}{M_r^{\beta-1}}\quad\text{in}\ \Sigma^r.
    \end{align}
It follows from \eqref{main-eq-omega},  the Gagliardo-Nirenberg inequality,  the Sobolev inequatlity and $\left\|u_r\right\|_{H^1} \leq C$ that  the sequence $\left\{\lambda_r\right\}$ is bounded. According to   $L^p$ estimates  (see [20, Theorem 9.11]), we can deduce that $\omega_r \in W_{\text {loc }}^{2, p}(\Sigma)$ and $\|\omega_r\|_{W_{\text {loc }}^{2, p}(\Sigma)} \leq C$ for any $p>1$ and $\Sigma:=\lim\limits _{r \rightarrow \infty} \Sigma^r$. Using Sobolev embedding theorem, we can obtain that $\omega_r \in C_{\mathrm{loc}}^{1, t}(\Sigma)$ for some $t \in(0,1)$ and $\|\omega_r\|_{C_{\text {loc }}^{1, t}(\Sigma)} \leq C$.   Therefore,  applying  the Arzela-Ascoli theorem, we find that there exists $v$ such that up to a subsequence
 \[\omega_r\to\omega\quad\text{in}\  C_{loc}^\beta(\Sigma).\]
 where $\Sigma=\lim_{r\to\infty}\Sigma^r$ is a smooth domain.
By the standard direct method ( see \cite[Lemma2.7]{BQZ24}),  we obtain that   \[
\liminf_{r\to\infty}\frac{\text{dist}(x_r,\partial\Omega_r)}{\tau_r}>0.\]
As a result, under the condition (G3), let $r \to \infty$ in \eqref{equ:241204-e1}, we find that
   $\omega$ is a nonnegative solution of problem
    \[\begin{cases}
        -\Delta\omega =B|\omega|^{\beta-2}\omega,&\text{in}\ \Sigma,\\
        \omega=0&\text{on}\ \partial\Sigma.
    \end{cases}\]
 If \[
\liminf_{r\to\infty}\frac{\text{dist}(x_r,\partial\Omega_r)}{\tau_r}=\infty\]
occurs,  $\Sigma=\mathbb{R}^N$. Since $\beta<2^*$, by the remarkable result \cite{CL91}, the only nonnegative solution of is 0,  which is impossible due to the fact that $\omega(0)=\liminf_{r\to\infty}\omega_r(0)=1$. If \[
\liminf_{r\to\infty}\frac{\text{dist}(x_r,\partial\Omega_r)}{\tau_r}=m>0\]
occurs, we next \textbf{claim} $\Sigma$ is a half space. 
In fact, let  dist$(x_r,\partial\Omega_r)=|x_r-z_r|$ with $z_r\in\partial\Omega_r$, then $\tilde{z}_r=z_r/r\in\partial\Omega$ and $\tilde{x}_r=x_r/r\in\Omega$.  Based on the definition of $\Sigma_r$,  the origin is located at $\tilde{x}_r$ and $\tilde{x}_r-\tilde{z}_r=|x_r-z_r|(1,0,...,0)$ by translating  and rotating  the coordinate
system.
Assume up to a subsequence that $\tilde{z}_r\to z$ with $z\in\Omega$.
And by the smoothness of the domain see \cite[section 6.2]{GT83},  functions $f_r,f:\mathbb{R}^{N-1}\to\mathbb{R}$ are smooth  such that
\[f_r(0)=-|\tilde{x}_r-\tilde{z}_r|=-\frac{|x_r-z_r|}{r},\quad \nabla f_r(0)=0,\]
    \begin{equation}\label{>f}
        \Omega\cap B(\tilde{z}_r,\delta)=\{x\in B(\tilde{z}_r,\delta):x_1>f_r(x_2,...,x_N)\},
    \end{equation}
    and
    \begin{equation}\label{=f}
        \partial\Omega\cap B(\tilde{z}_r,\delta)=\{x\in B(\tilde{z}_r,\delta):x_1=f_r(x_2,...,x_N)\}.
    \end{equation}
    Moreover, \eqref{>f} and \eqref{=f} hold by replacing $\tilde{z}_r$ and $f_r$ with z and f , respectively.
And $f_r\to f$ in $C^1(B_{\frac{\delta}{2}})$, where $B_{\frac{\delta}{2}}\subset\mathbb{R}^{2}$. Consequently,
\[
 \partial\Omega_r\cap B(z_r,r\delta)=\{(rf_r(x'),rx'):|x'|<\delta\}=\big\{\bigg(rf_r(\frac{x'}{r}),x'\bigg):|x'|<r\delta\big\},
\]
where $x'\in\mathbb{R}^{N-1}$. As a consequence, for any $x'\in\mathbb{R}^{N-1}$ and large $r$,
\[
y_r=\bigg(rf_r(\frac{\tau_rx'}{r}),\tau_rx'\bigg)\in\partial\Omega_r\cap B(z_r,r\delta).
\]
Moreover,
\[
\frac{y_r-x_r}{\tau_r}=\bigg(\frac{rf_r(\frac{\tau_rx'}{r})}{\tau_r},x'\bigg).
\]
Note that
\begin{eqnarray*}
\frac{rf_r(\frac{\tau_rx'}{r})}{\tau_r}&=&
\frac{rf_r(\frac{\tau_rx'}{r})-rf_r(0)+rf_r(0)}{\tau_r}
=\frac{r\nabla f_r(\frac{\theta_r\tau_rx'}{r})\cdot\frac{\tau_rx'}{r}-|x_r-z_r|}{\tau_r}  \\
&\to&\nabla f(0)\cdot x'-m=-m.
\end{eqnarray*}
As a result, $\frac{y_r-x_r}{\tau_r}\to(-m,x')$ as $r\to\infty$. Therefore $\Sigma=\{x\in\mathbb{R}^N:x_1>-m\}$, the claim hold. Then we can use the Liouville theorem \cite{EL82}, $\omega=0$ in $\Sigma$, this contradicts $\omega(0)=\liminf_{r\to\infty}\omega_r(0)=1$.  The proof is now complete.
\end{proof}

\begin{lemma}\label{lambda-r>0-betale0}
Assume that the assumptions of Theorem \ref{betale0-e>0-Omega} hold. Then the following results hold.
\begin{description}
\item[(i)] For every $c>0$, problem \eqref{main-eq-omega}  has a solution $(\lambda_r,u_r)$ provided $r>r_c$ where $r_c$ is as in Lemma \ref{betale0-mp-g}. Moreover, $u_r>0$ in $\Omega_r$.
  \item[(ii)] Let $(\lambda_{r,c},u_{r,c})$ is the solution of problem \eqref{main-eq-omega}. Then there exists $\tilde{c}>0$ such that
\[
\underset{r\to\infty}{\liminf}\,\lambda_{r,c}>0\quad\text{for}\ 0<c<\tilde{c}.
\]
\end{description}
\end{lemma}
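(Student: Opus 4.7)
For part (i), I fix $r>r_c$ and pass to the limit along a sequence $s_n\uparrow 1$ in the family of perturbed solutions $(\lambda_{r,s_n},u_{r,s_n})$ produced by Theorem \ref{beta>0-s-omega}. The proof of Lemma \ref{nabla-u-bdd} applies uniformly in $s\in[1/2,1]$, so $\|u_{r,s_n}\|_{H^1}$ is uniformly bounded; moreover the identity $\lambda_{r,s_n}c=s_n\int_{\Omega_r}g(u_{r,s_n})u_{r,s_n}\,dx-\|\nabla u_{r,s_n}\|_2^2-\int_{\Omega_r}Vu_{r,s_n}^2\,dx$ combined with (G2) bounds $\lambda_{r,s_n}$ uniformly. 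Because $\Omega_r$ is fixed and bounded, the embedding $H_0^1(\Omega_r)\hookrightarrow L^p(\Omega_r)$ is compact for $p\in[1,2^*)$; along a subsequence $\lambda_{r,s_n}\to\lambda_r$, $u_{r,s_n}\rightharpoonup u_r$ in $H_0^1(\Omega_r)$ and $u_{r,s_n}\to u_r$ strongly in $L^p$, so $u_r\in S_{r,c}$. Testing the equation with $u_{r,s_n}-u_r$ and using strong $L^p$-convergence of the nonlinear terms upgrades this to strong convergence in $H_0^1$, so $(\lambda_r,u_r)$ solves \eqref{main-eq-omega}; nonnegativity passes to the limit, and the strong maximum principle applied to $-\Delta u_r+(V+\lambda_r)u_r=g(u_r)$ gives $u_r>0$ on $\Omega_r$.

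For part (ii) I argue by contradiction: suppose $r_n\to\infty$ and $\lambda_n:=\lambda_{r_n,c}\to\lambda^*\leq 0$, and set $u_n:=u_{r_n,c}$. By Lemmas \ref{nabla-u-bdd} and \ref{ur-ubdd}, $\|u_n\|_{H^1}+\|u_n\|_\infty$ is uniformly bounded and the Nehari identity shows $\lambda^*\in\mathbb{R}$. After extending $u_n$ by zero to $H^1(\mathbb{R}^N)$, I apply the Lions concentration-compactness lemma. Vanishing is ruled out by Lemma \ref{betale0-mp-g}(iii): if $u_n\to 0$ in $L^p(\mathbb{R}^N)$ for $p\in(2,2^*)$, then $\int G(u_n)\to 0$ and, using $V\to 0$ at infinity together with local compactness of $H^1$, $\int Vu_n^2\to 0$; the Nehari identity and the energy then force $m_{r_n,c}(c)\to -\lambda^*c/2$, while the lower bound $m_{r_n,c}(c)\geq f(\tilde t)$ gives $-\lambda^*\geq 2f(\tilde t)/c$, which is impossible for bounded $\lambda^*$ once $c<\tilde c$ is sufficiently small, since a direct computation of $f(\tilde t)$ shows $f(\tilde t)/c\to\infty$ as $c\to 0^+$.

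Therefore, up to translation by some $y_n\in\mathbb{R}^N$, $\tilde u_n:=u_n(\cdot+y_n)\rightharpoonup u^*\neq 0$ in $H^1(\mathbb{R}^N)$, and $u^*$ satisfies the limit equation $-\Delta u^*+V^\infty u^*+\lambda^* u^*=g(u^*)$ on the limiting domain (either $\mathbb{R}^N$ or a half-space, with $V^\infty=V$ when $\{y_n\}$ is bounded and $V^\infty\equiv 0$ when $|y_n|\to\infty$ by $(V_1)$); in both geometries the Pohozaev boundary term vanishes. Combining the resulting Nehari and Pohozaev identities for $u^*$ via $\beta\cdot(\mathrm{Poh})-2N\cdot(\mathrm{Neh})$ and applying (G2) yields
\[
\lambda^*\|u^*\|_2^2\ \geq\ \frac{2N-\beta(N-2)}{N(\beta-2)}\|\nabla u^*\|_2^2-\int V^\infty(u^*)^2\,dx-\frac{\beta}{N(\beta-2)}\int\tilde V^\infty(u^*)^2\,dx.
\]
Since $\beta<2^*$ the gradient coefficient is strictly positive, the two potential integrals are $O(c)$ by boundedness of $V,\tilde V$ and $\|u^*\|_2^2\leq c$, and $\|\nabla u^*\|_2^2$ admits a strictly positive lower bound inherited from the mountain-pass structure of the problem. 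Taking $\tilde c$ small enough makes the right-hand side strictly positive, contradicting $\lambda^*\leq 0$.

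The main obstacle is producing the uniform strictly positive lower bound on $\|\nabla u^*\|_2^2$ for $c<\tilde c$: one must combine the mountain-pass lower bound $m_{r_n,c}(c)\geq f(\tilde t)$ with the energy identity and the strong $L^p_{loc}$-convergence of $\tilde u_n\to u^*$ to quantify how much $H^1$-mass of $u_n$ is captured by the translates, and to show that this captured mass dominates the $O(c)$ potential terms. A secondary technical difficulty is the careful bookkeeping of the constants entering $\tilde c$ in terms of $\alpha,\beta,N,S,\|V\|_\infty,\|\tilde V\|_\infty$.
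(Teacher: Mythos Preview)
Your proof of (i) is correct and matches the paper's: pass $s\to 1$ using the uniform bound from Lemma~\ref{nabla-u-bdd}, upgrade weak to strong $H_0^1$-convergence via compact embeddings, and apply the strong maximum principle.

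For (ii) there is a genuine gap at exactly the point you flag as the ``main obstacle'', and your proposed fix goes in the wrong direction. The decisive lower bound on $\|\nabla u^*\|_2^2$ does \emph{not} come from the mountain-pass level $f(\tilde t)$; it comes from combining the Pohozaev identity with the equation for the limit $u^*$. Subtracting Pohozaev from half the Nehari identity and using $g(s)s\le\beta G(s)$ together with Gagliardo--Nirenberg gives
\[
\Big(\tfrac{1}{N}-\tfrac{\|\tilde V_+\|_{N/2}}{2NS}\Big)\|\nabla u^*\|_2^2
\ \le\ \tfrac{\beta-2}{2}\,C_2 C_{N,d}\,\|u^*\|_2^{\frac{2d-N(d-2)}{2}}\|\nabla u^*\|_2^{\frac{N(d-2)}{2}},
\]
and since $N(d-2)/2>2$ and $\|u^*\|_2^2\le c$, this yields $\|\nabla u^*\|_2^2\gtrsim c^{\frac{d(N-2)-2N}{N(d-2)-4}}\to\infty$ as $c\to 0^+$. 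Plugging this into your displayed inequality forces $\lambda^*\|u^*\|_2^2\to +\infty$, contradicting $\lambda^*\le 0$. Note this is precisely where the hypothesis $\|\tilde V_+\|_{N/2}<2S$ enters; your sketch never uses it. By contrast, the mountain-pass bound gives $f(\tilde t)\sim c^{\frac{2d-N(d-2)}{N(d-2)-4}}$ with a \emph{positive} exponent, so it tends to $0$ and cannot dominate the $O(c)$ potential terms.

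Your vanishing argument also does not close. You claim $f(\tilde t)/c\to\infty$ as $c\to 0^+$, but a direct computation shows the exponent of $c$ in $f(\tilde t)/c$ is $\frac{2d-2N(d-2)+4}{N(d-2)-4}$, which is positive for $d$ close to $2+\frac{4}{N}$ (e.g.\ $N=3$, $d=3.5$ gives $f(\tilde t)/c\sim c^{4}\to 0$). Even granting the claim, ``$-\lambda^*\ge 2f(\tilde t)/c$'' is not a contradiction unless you also bound $-\lambda^*$ from above by something smaller, and your sketch provides no such bound. The paper avoids this entirely: it first proves $\liminf_{r\to\infty}\max_{\Omega_r}u_{r,c}>0$ for small $c$ via an eigenvalue comparison (showing that $\max u_{r,c}\to 0$ would force $\lambda_{r,c}\to-\infty$, which violates $\theta_1/r^2+\|V\|_\infty+\lambda_{r,c}/2\ge 0$), thereby ruling out vanishing without touching the mountain-pass level.

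Finally, the half-space alternative is not handled by Pohozaev (the boundary term at infinity does not obviously vanish); the paper excludes it via the Liouville theorem of Esteban--Lions, which kills any nontrivial nonnegative solution of $-\Delta v+\lambda v=g(v)$ on a half-space.
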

\begin{proof}
(i) For fixed $c>0$ and  $r>\tilde{r}_c$. Based on the preceding Theorem \ref{beta>0-s-omega} and Lemma \ref{nabla-u-bdd}, we demonstrate that there exist solutions $\{(\lambda_{r,c,s},u_{r,c,s})\}$ to problem \eqref{main-eq-s-omega} for $s\in\left[1/2,1\right]$, and $\{u_{r,c,s}\}\subset S_{r,c}$ is bounded. Now, using a technique similar to the proof of  Theorem \ref{beta>0-s-omega}, we obtain $u_{r,c}\in S_{r,c}$ and $\lambda_{r,c}\in\mathbb{R}$ such that going to a subsequence, $\lambda_{r,c,s}\to\lambda_{r,c}$ and $u_{r,c,s}\to u_{r,c}$ in $H_0^{1}(\Omega_r)$ as $s\to1$. This combined with the strong maximum principle leads to that $u_{r,c}>0$ is a solution of problem \eqref{main-eq-omega}.

For (ii), let $(\lambda_{r,c},u_{r,c})$ is the solution of problem \eqref{main-eq-omega}. The regularity theory of elliptic partial differential equations yields $u_{r,c}\in C(\Omega_r)$.  In view of Lemma \ref{ur-ubdd} we have
\[
\underset{r\to\infty}{\limsup}\,\underset{\Omega_r}{\max}\,u_{r,c}<\infty.
\]
Furthermore, let us \textbf{claim} that there exists $c'_1>0$ such that
\begin{equation}\label{g-alpha>0}
h(c):=\underset{r\to\infty}{\liminf}\,\underset{\Omega_r}{\max}\,u_{r,c}>0
\end{equation}
for any $0<c<c'_1$. Assume
by  contradiction that there exists a sequence $c_k\to0$  as $k\to\infty$ such that
$h(c_k)=0$ for any $k$, that is,
\begin{equation*}
\underset{r\to\infty}{\limsup}\,\underset{\Omega_r}{\max}\,u_{r,c_k}=0\quad\text{for any}\ k.
\end{equation*}
For any fixed $k$, it follows from  $u_{r,c_k}\in S_{r,c_k}$ that for any $t>2$
\begin{equation}\label{urk-m}
\int_{\Omega_r}|u_{r,c_k}|^t\mathrm{d}x\le\big|\underset{\Omega_r}{\max}\,
u_{r,c_k}\big|^{t-2}c_k\to0\quad \text{as}\ r\to\infty.
\end{equation}
Hence, there exists $\bar{r}_k>0$ such that
\[
\Big|\int_{\Omega_r}G(u_{r,c_k})\mathrm{d}x\Big|
<\frac{m_{r,1}(c_k)}{2}\quad\text{for any}\ r\ge\bar{r}_k.
\]
In view of $E_r(u_{r,c_k})=m_{r,1}(c_k)$, we further have that for any large $k$
\begin{equation}\label{nabia-urk-s}
\int_{\Omega_r}|\nabla u_{r,c_k}|^2\mathrm{d}x+\int_{\Omega_r}V u_{r,c_k}^2\mathrm{d}x
\ge m_{r,1}(c_k),\ r\ge\bar{r}_k.
\end{equation}
It follows from \eqref{urk-m}-\eqref{nabia-urk-s} that there exists $r_k\ge \bar{r}_k$ with $\bar{r}_k\to\infty$ as $k\to\infty$ such that
\begin{equation}\label{urk-max}
\underset{k\to\infty}{\limsup}\,\underset{\Omega_{r_k}}{\max}\,u_{r_k,c_k}=0,
\end{equation}
\begin{equation}\label{urk-s-2}
\int_{\Omega_{r_k}}|u_{r_k,c_k}|^t\mathrm{d}x\le\big|\underset{\Omega_{r_k}}{\max}\,
u_{r_k,c_k}\big|^{t-2}c_k\to0\quad \text{as}\ k\to\infty\ \text{for any }t>2.
\end{equation}
and
\begin{equation}\label{nabia-urk-s-2}
\int_{\Omega_{r_k}}|\nabla u_{r_k,c_k}|^2\mathrm{d}x+\int_{\Omega_{r_k}}V u_{r_k,c_k}^2\mathrm{d}x\to\infty\quad\text{as}\ k\to\infty.
\end{equation}
By Equation \eqref{main-eq-omega} and \eqref{urk-s-2}-\eqref{nabia-urk-s-2}
\begin{equation}\label{lambda-k}
\lambda_{r_k,c_k}\to-\infty\ \text{as}\ k\to\infty.
\end{equation}
Now this together with \eqref{main-eq-omega} implies that
\begin{eqnarray*}
&&-\Delta u_{r_k,c_k}+\bigg(\|V\|_\infty
+\frac{\lambda_{r_k,c_k}}{2}u_{r_k,c_k}\bigg)u_{r_k,c_k}\\
&\ge&\bigg(-\frac{\lambda_{r_k,c_k}}{2}+g(u_{r_k,c_k})\bigg)u_{r_k,c_k}\ge0
\end{eqnarray*}
for large $k$. Let $\theta_{r_k}$ be the principal eigenvalue of $-\Delta$ with Dirichlet boundary condition in $\Omega_{r_k}$, and $0<v_{r_k}$ be the corresponding normalized eigenfunction. Then $\theta_{r_k}=\theta_1/r_k^2$ and
\[
\bigg(\frac{\theta_1}{r_k^2}+\|V\|_\infty+\frac{\lambda_{r_k,c_k}}{2}\bigg)
\int_{\Omega_{r_k}}u_{r_k,c_k}v_{r_k}\mathrm{d}x\ge0.
\]
Since $\int_{\Omega_{r_k}}u_{r_k,c_k}v_{r_k}\mathrm{d}x>0$, we have
\[
\frac{\theta_1}{r_k^2}+\|V\|_\infty+\frac{\lambda_{r_k,c_k}}{2}\ge0,
\]
which contradicts \eqref{lambda-k} for large $k$. Hence the \textbf{claim} holds.
Consider $H_0^1(\Omega_r)$ as a subspace of $H^1(\mathbb{R}^{N})$ for any $r>0$. It follows from Lemma \ref{nabla-u-bdd} that the set of solutions $\{u_{r,c}:r>r_c\}$ established in Theorem \ref{beta>0-s-omega} is bounded in $H^1(\mathbb{R}^N)$,  so there exist $u_c\in H^1(\mathbb{R}^N)$ and $\lambda_c\in\mathbb{R}$ such that as $r\to\infty$ up to a subsequence:
\begin{gather*}
  \lambda_{r,c}\to\lambda_c,
  \quad u_{r,c}\rightharpoonup u_c\quad\text{in}\ H^1(\mathbb{R}^N),\\
  u_{r,c}\to u_c\quad\text{in}\ L_{loc}^{t}(\mathbb{R}^N)\ \text{for all}\ 2\le t<2^*,
\end{gather*}
and $u_c$ is a solution of the equation
\[
-\Delta u_c+Vu_c+\lambda_c u_c=g(u_c)\quad \text{in}\ \mathbb{R}^N.
\]
Next, it is explained in two cases.

\textbf{Case1:} $u_c\neq0$ for $c>0$ small.
Therefore,
\begin{equation}\label{alpha-eq}
\int_{\mathbb{R}^N}|\nabla u_c|^2\mathrm{d}x+\int_{\mathbb{R}^N}Vu_c^2\mathrm{d}x
+\lambda_c\int_{\mathbb{R}^N}u_c^2\mathrm{d}x
=\int_{\mathbb{R}^N}g(u)u\mathrm{d}x
\end{equation}
and the Pohozaev identity gives
\begin{equation}\label{alpha-eq-pohozaev}
\frac{N-2}{2N}\int_{\mathbb{R}^N}|\nabla u_c|^2\mathrm{d}x
+\frac{1}{2N}\int_{\mathbb{R}^N}\tilde{V}u_c^2\mathrm{d}x
+\frac{1}{2}\int_{\mathbb{R}^N}Vu_c^2\mathrm{d}x
+\frac{\lambda_c}{2}\int_{\mathbb{R}^N}u_c^2\mathrm{d}x
=\int_{\mathbb{R}^N}G(u)\mathrm{d}x.
\end{equation}
Next it follows from \eqref{G-compare},  \eqref{alpha-eq} and \eqref{alpha-eq-pohozaev} that
\begin{eqnarray*}
\bigg(\frac{1}{N}-\frac{\|\tilde{V}_+\|_{\frac{N}{2}}S^{-1}}{2N}\bigg)\int_{\mathbb{R}^N}|\nabla u_c|^2\mathrm{d}x&\le&\frac{1}{N}\int_{\mathbb{R}^N}|\nabla u_c|^2\mathrm{d}x
-\frac{1}{2N}\int_{\mathbb{R}^N}\tilde{V}u_c^2\mathrm{d}x\\
&=&\frac{1}{2}\int_{\mathbb{R}^N}g(u)u\mathrm{d}x-\int_{\mathbb{R}^N}G(u)\mathrm{d}x\notag\\
&\le&\frac{\beta-2}{2}\int_{\mathbb{R}^N}G(u)\mathrm{d}x \\
&\le&\frac{C_{N,d}(\beta-2)}{2}C_2\bigg(\int_{\mathbb{R}^N}|u_c|^2\mathrm{d}x\bigg)^{\frac{d(N-2)-2N}{4}}
\bigg(\int_{\mathbb{R}^N}|\nabla u_c|^2\mathrm{d}x\bigg)^{\frac{N(d-2)}{4}}.
\end{eqnarray*}
As a consequence there holds for $u_c\neq0$:
\begin{align}\label{equ:250103-e5}
    \int_{\mathbb{R}^N}|\nabla u_c|^2\mathrm{d}x\ge
\bigg(\frac{(2-\|\tilde{V}_+\|_{\frac{N}{2}}S^{-1})}{NC_{N,d}C_2(\beta-2)}\bigg)
^{\frac{4}{N(d-2)-4}}c^{\frac{d(N-2)-2N}{N(d-2)-4}}.
\end{align}
On the other hands, from the above  inequalities and (G2), we conclude that
\begin{eqnarray}\label{equ:250103-e4}
\frac{2-\beta}{2\beta}\lambda_c\int_{\mathbb{R}^N}u_c^2\mathrm{d}x
&=&\frac{(N-2)\beta-2N}{2N\beta}\int_{\mathbb{R}^N}|\nabla u_c|^2\mathrm{d}x
+\frac{1}{2N}\int_{\mathbb{R}^N}\tilde{V}u_c^2\mathrm{d}x\\
&&\quad+\frac{\beta-2}{2\beta}\int_{\mathbb{R}^N}Vu_c^2\mathrm{d}x
+\frac{1}{\beta}\int_{\mathbb{R}^N}g(u)u\mathrm{d}x-\int_{\mathbb{R}^N}G(u)\mathrm{d}x\notag\\  \nonumber
&\le&\frac{(N-2)\beta-2N}{2N\beta}\int_{\mathbb{R}^N}|\nabla u_c|^2\mathrm{d}x
+\frac{\|\tilde{V}\|_\infty}{2N}c+\frac{\beta-2}{2\beta}\|V\|_\infty c\\ \nonumber
&\le&\frac{(N-2)\beta-2N}{2N\beta}
\bigg(\frac{(2-\|\tilde{V}_+\|_{\frac{N}{2}}S^{-1})}{NC_{N,d}C_2(\beta-2)}\bigg)
^{\frac{4}{N(d-2)-4}}c^{\frac{d(N-2)-2N}{N(d-2)-4}}
\\ \nonumber
&&\quad+\frac{\|\tilde{V}\|_\infty}{2N}c+\frac{\beta-2}{2\beta}\|V\|_\infty c\\  \nonumber
&\to&-\infty\quad\text{as}\ c\to0.
\end{eqnarray}
Therefore,  $\lambda_c>0$ for $c>0$ small.

\textbf{Case2: }There is a sequence $c_n\to0$
such that $u_{c_n}=0$. In view of \eqref{g-alpha>0}, $u_{c_n}=0$ for
any $c_n\in\left(0,c'_1\right)$. Let $z_{r,c_n}\in\Omega_r$ be such that $u_{r,c_n}(z_{r,c_n})=\max_{\Omega_r}u_{r,c_n}$, it holds $|z_{r,c_n}|\to\infty$ as $r\to\infty$. Otherwise, there exists $z_0\in\mathbb{R}$ such
that $z_{r,c_n}\to z_0$, and hence $u_{c_n}(z_0)\ge d_{c_n}> 0$. This contradicts
$u_{c_n}=0$.  Moreover, dist$(z_{r,c_n},\partial\Omega_r)\to\infty$ as $r\to\infty$ by an argument similar to that in Lemma \ref{lambda-r>0-betale0}.
Now, for $n$ fixed, let $v_r(x)=u_{r,c_n}(x+z_{r,c_n})$ for any $x\in\Sigma_r:=\{x\in\mathbb{R}^N:x+z_{r,c_n}\in\Omega_r\}$.
It follows from Lemma \ref{nabla-u-bdd} that $v_r$ is bounded in $H^1(\mathbb{R}^N)$, and there is $v\in H^1(\mathbb{R}^N)$ with $v\neq0$ such that $v_r\rightharpoonup v$ as $r\to\infty$.
Observe that for every $\varphi\in C_c^\infty(\Omega_r)$, there is  $r$ large such that $\varphi(\cdot-z_{r,c})\in C_c^\infty(\Omega_r)$ due to dist$(z_{r,c_n},\partial\Omega_r)\to\infty$ as $r\to\infty$. It then follows that
\begin{eqnarray}\label{varphi-z}
&&\int_{\Omega_r}\nabla u_{r,c_n}\nabla\varphi(\cdot-z_{r,c_n})\mathrm{d}x
+\int_{\Omega_r}Vu_{r,c_n}\varphi(\cdot-z_{r,c_n})\mathrm{d}x
+\lambda_{r,c_n}\int_{\Omega_r}u_{r,c_n}\varphi(\cdot-z_{r,c_n})\mathrm{d}x\notag\\
&=&\int_{\Omega_r}g(u_{r,c_n})u_{r,c_n}\varphi(\cdot-z_{r,c_n})\mathrm{d}x.
\end{eqnarray}
Since $|z_{r,c_n}|\to\infty$ as $r\to\infty$, we have
\begin{eqnarray}\label{v-0}
\Big|\int_{\Omega_r}Vu_{r,c_n}\varphi(\cdot-z_{r,c_n})\mathrm{d}x\Big|
&\le&\int_{\text{Supp}\varphi}\Big|V(\cdot+z_{r,c_n})v_{r}\varphi\Big|\mathrm{d}x\notag\\
&\le&\|v_{r}\|_{2^*}\|\varphi\|_{2^*}
\bigg(\int_{\text{Supp}\varphi}|V(\cdot+z_{r,c_n})|^{\frac{3}{2}}\mathrm{d}x\bigg)^{\frac{2}{3}}\notag\\
&\le&\|v_{r}\|_{2^*}\|\varphi\|_{2^*}
\bigg(\int_{\mathbb{R}^N\setminus B_{\frac{|z_{r,c_n}|}{2}}}|V(\cdot+z_{r,c_n})|
^{\frac{3}{2}}\mathrm{d}x\bigg)^{\frac{2}{3}}\notag\\
&\to&0\quad\text{as}\ r\to\infty.
\end{eqnarray}
Letting $r\to\infty$ in \eqref{varphi-z} we obtain for $\varphi\in C_c^\infty(\mathbb{R}^{N})$:
\[
\int_{\mathbb{R}^N}\nabla v\cdot\nabla\varphi\mathrm{d}x
+\lambda_{c_n}\int_{\mathbb{R}^N}v\varphi\mathrm{d}x
=\int_{\mathbb{R}^N}g(v)\varphi\mathrm{d}x.
\]
 Now we argue as in the case that $u_c\neq0$ above, there exists $\tilde{c}<c'_1$ such that $\lambda_c>0$ for any $c\in\left(0,\tilde{c}\right)$, the proof is now complete.
\end{proof}

\noindent\textbf{Proof of Theorem \ref{betale0-e>0-Omega}:} The proof is an immediate consequence of Theorem \ref{beta>0-s-omega} and Lemma \ref{lambda-r>0-betale0}.\qed

\section{Normalized solution in $\mathbb{R}^N$}

The aim of this section is to obtain the existence of normalized solution to Eq \eqref{main-eq}-\eqref{constraint} in  $\mathbb{R}^N$. For this purpose, under the assumptions of  Theorem \ref{betale0-e>0-Omega}, we first obtain the solutions  $(\lambda_r,u_r)$ to \eqref{main-eq-s-omega} with $\Omega_r=B_r$, see Theorem \ref{betale0-e>0-Omega}. Then by Lemma \ref{lambda-r>0-betale0}, we conclude that $u_r$ is bounded uniformly in $r$ and $\liminf_{r\to\infty}\lambda_r>0$.
The next lemma is to analyze the compactness of normalized solutions $u_r$ for \eqref{main-eq-s-omega} with $\Omega=B_r$ as  $r\to\infty$.

\begin{lemma}\label{rinftycom}
Under the assumptions of   Theorem \ref{betale0-e>0-Omega}, let $\{(\lambda_r,u_r)\}$ be a sequence of solutions of  \eqref{main-eq-omega} with $\Omega_r=B_r$. Then there exist a subsequence (
still denoted by $\{(\lambda_r,u_r)\}$), a $u_0\in H^1(\mathbb{R}^N)$ satisfying
\[
  -\Delta u+Vu+\lambda u=g(u)\quad\text{in}\ \mathbb{R}^N,
\]
$l\in\mathbb{N}\cup\{0\}$, nontrivial solutions  $w^1,...,w^l\in H^1(\mathbb{R}^N)\setminus\{0\}$ of the following problem
  \begin{equation}\label{omega}
  -\Delta u+\lambda u=g(u) \quad\text{in}\ \mathbb{R}^N
  \end{equation}
such that
\[
\lambda_r\to\lambda>0,\quad\Big\|u_r-u_0-\Sigma_{k=1}^{l}w^k(\cdot-z_r^k)\Big\|_{H^1}\to0\quad \text{as}\ r\to\infty,
\]
where $\{z^k_r\}\subset\mathbb{R}^{N}$ with $z^k_r\in B_r$  satisfying
\begin{equation}\label{zk-dist}
|z^k_r|\to\infty,\quad dist(z^k_r,\partial B_r)\to\infty,\quad|z^k_r-z^{k'}_r|\to\infty
\end{equation}
for any $k,k'=1,...,l$ and $k\neq k'$.
\end{lemma}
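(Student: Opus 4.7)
The plan is to combine the uniform bounds already established with a Lions-Struwe profile decomposition adapted to the exhausting sequence of balls $B_r$. By Lemma \ref{nabla-u-bdd} the family $\{u_r\}$ is bounded in $H^1(\mathbb{R}^N)$ after extension by zero; testing the equation against $u_r$ and using $(V_0)$ together with (G2) gives boundedness of $\lambda_r$, and Lemma \ref{lambda-r>0-betale0}(ii) lets me assume along a subsequence that $\lambda_r \to \lambda > 0$. Weak compactness then yields $u_r \rightharpoonup u_0$ in $H^1(\mathbb{R}^N)$ with $u_r \to u_0$ a.e.; passing to the limit in the weak formulation (using $V \in L^{N/2}$ and the subcritical growth of $g$ from (G2)--(G3) together with local compactness) produces the equation $-\Delta u_0 + V u_0 + \lambda u_0 = g(u_0)$ on $\mathbb{R}^N$.

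I would then iterate a bubbling procedure. Set $\tilde u_r^{(0)} := u_r - u_0$. If $\tilde u_r^{(0)} \to 0$ in $H^1$, the lemma holds with $l = 0$. Otherwise the Lions vanishing lemma, combined with the subcritical growth of $g$ and $\lambda > 0$ (both needed to promote $L^s$-convergence to $H^1$-convergence through the equation), produces $\delta > 0$ and points $z_r^1 \in B_r$ with $\int_{B(z_r^1,1)} |\tilde u_r^{(0)}|^2 \ge \delta$. Since $u_0$ already absorbs the non-escaping mass, $|z_r^1| \to \infty$. The delicate point is to show $\mathrm{dist}(z_r^1,\partial B_r) \to \infty$, which has no analogue in the $\mathbb{R}^N$ decomposition; I would argue as in Lemma \ref{ur-ubdd}: if the distance stayed bounded, translating by $z_r^1$ and flattening $\partial B_r$ would realise the limit domain as a half-space $\Sigma = \{x_1 > -m\}$, and the $L^\infty$ bound of Lemma \ref{ur-ubdd}, together with elliptic $W^{2,p}_{\mathrm{loc}}$ estimates and Arzel\`a-Ascoli, would yield a nontrivial nonnegative bounded solution of $-\Delta \hat w + \lambda \hat w = g(\hat w)$ on $\Sigma$ with zero boundary data. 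For $\lambda > 0$ and $g$ satisfying (G2), a moving-plane / Liouville argument on the half-space rules this out, producing the desired separation from $\partial B_r$. With this in hand, the translate $\tilde u_r^{(0)}(\cdot + z_r^1)$ is well defined on any fixed ball for large $r$; since $|z_r^1| \to \infty$ and $(V_1)$ gives $V(\cdot + z_r^1) \to 0$ locally, its weak limit $w^1 \ne 0$ solves the limit equation \eqref{omega}.

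To iterate, I set $\tilde u_r^{(k)} := \tilde u_r^{(k-1)} - w^k(\cdot - z_r^k)$ and repeat the dichotomy. A Brezis-Lieb splitting for $\|\cdot\|_{H^1}$ and for $\int G(\cdot)$ (valid thanks to a.e.\ convergence after translation and the growth bound \eqref{G-compare}) gives
\[
\|u_r\|_{H^1}^2 = \|u_0\|_{H^1}^2 + \sum_{k=1}^{j} \|w^k\|_{H^1}^2 + \|\tilde u_r^{(j)}\|_{H^1}^2 + o(1),
\]
and each nontrivial profile $w^k$ solving \eqref{omega} satisfies a uniform lower bound $\|w^k\|_{H^1}^2 \ge \eta > 0$ coming from the Nehari/Pohozaev identity associated with \eqref{omega} under (G2) and $\lambda > 0$. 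Since $\|u_r\|_{H^1}$ is bounded, only finitely many profiles can be extracted and the process terminates at some $l$ with $\tilde u_r^{(l)} \to 0$ in $H^1$. The separation $|z_r^k - z_r^{k'}| \to \infty$ is forced inductively: if along a subsequence $z_r^k - z_r^{k'} \to \zeta$, then the $w^{k'}$-mass already subtracted at stage $k'$ would make the weak limit of $\tilde u_r^{(k-1)}(\cdot + z_r^k)$ vanish, contradicting the choice of $z_r^k$. The main obstacle throughout is the boundary-separation step, since the Dirichlet condition on $\partial B_r$ breaks the translation invariance that the standard whole-space profile decomposition relies on, and genuinely excluding half-space bubbles is what forces the use of the $L^\infty$ control of Lemma \ref{ur-ubdd} and a Liouville-type statement.
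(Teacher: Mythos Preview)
Your proposal follows essentially the same route as the paper: extract the weak limit $u_0$, run the Lions vanishing/non-vanishing dichotomy on the remainder, rule out boundary concentration by passing to a half-space limit and invoking the Esteban--Lions Liouville theorem, and terminate the iteration via a Pohozaev-type lower bound on $\|w^k\|_{H^1}$. One small correction: the lemma is stated under the hypotheses of Theorem~\ref{betale0-e>0-Omega}, which include $(V_0)$ but not $(V_1)$; the disappearance of the potential after translation by $z_r^k$ should be justified from $V\in L^{N/2}(\mathbb{R}^N)$ (as the paper does in \eqref{V-vanish}) rather than from the pointwise decay in $(V_1)$.
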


\begin{proof}
   Recall the fact that $u_r$ is bounded uniformly in $r$, there exist $ u_0\in H^1(\mathbb{R}^N)$ and $\lambda>0$ such that, up to a subsequence,
\[
\lambda_r\to\lambda,\quad u_r\rightharpoonup u_0\neq 0\quad\text{in}\ H^1(\mathbb{R}^N).
\]
Moreover, $u_0$ is a solution of
\[
-\Delta u+Vu+\lambda u=g(u) \quad\text{in}\ \mathbb{R}^N.
\]
Set $\nu_r^1=u_r-u_0$, then $\nu_r^1\rightharpoonup0$ as $r\to\infty$ in $H^1(\mathbb{R}^N)$.
We divide the proof into three steps:

\textbf{Step1: } The vanishing case occurs. That is,
\[
\underset{r\to\infty}{\limsup}\underset{z\in\mathbb{R}^N}{\sup}\int_{B(z,1)}|\nu_r^1|^2\mathrm{d}x=0.
\]
By the Brezis-Lieb lemma, the concentration compactness principle and the Vitali convergence theorem,
\begin{eqnarray*}
o_r(1)&=&\int_{\mathbb{R}^N}g(\nu_r)\nu_r\mathrm{d}x\\
&=&\int_{\mathbb{R}^N}g(u_r)u_r\mathrm{d}x
-\int_{\mathbb{R}^N} g(u_0)u_0\mathrm{d}x+o_r(1)\\
&=&\int_{\mathbb{R}^N}|\nabla u_r|^2\mathrm{d}x
+\int_{\mathbb{R}^N}Vu_r^2\mathrm{d}x
+\lambda\int_{\mathbb{R}^N}|u_r|^2\mathrm{d}x\\
&&\quad-\int_{\mathbb{R}^N}|\nabla u_0|^2\mathrm{d}x-\int_{\mathbb{R}^N}Vu_0^2\mathrm{d}x
-\lambda\int_{\mathbb{R}^N}|u_0|^2\mathrm{d}x\\
&=&\int_{\mathbb{R}^N}|\nabla (u_r-u_0)|^2\mathrm{d}x
+o_r(1).
\end{eqnarray*}
As a consequence $\nu_r^1=u_r-u_0\to 0$ in $H^1(\mathbb{R}^N)$, and the lemma holds for $l=0$.

\textbf{ Step2: }The vanishing does not occur. That is, there exists $z_r^1\in B_r$ such that
\begin{equation}\label{vrd}
\int_{B(z_r^1,1)}|\nu_r^1|^2\mathrm{d}x>m>0.
\end{equation}
We first claim that $|z_r^1|\to\infty$ as $r\to\infty$. Assume  by contradiction that  there is $R>0$ such that $|z_r^1|<R$,
\[
\int_{B_{R+1}}|\nu_r^1|^2\mathrm{d}x
\ge\int_{B(z_r^1,1)}|\nu_r^1|^2\mathrm{d}x>m,
\]
contradicting $\nu_r^1\rightharpoonup0$ as $r\to\infty$. Then $|z_r^1|\to\infty$ as $r\to\infty$.
Next, we \textbf{claim} that
\begin{equation}\label{dist}
\text{dist}(z_{r}^1,\partial B_r)\to\infty,\quad \text{as}\ r\to\infty.
\end{equation}
 Indeed, assume by contradiction that
$\liminf_{r\to\infty}\text{dist}(z_{r}^1,\partial B_r)=m_0<\infty$.
 It follows from \eqref{g-alpha>0} that $m_0>0$.
Let $v_r(x):=\nu_{r}^1(x+z_{r}^1)$ for any $x\in\Sigma_r:=\{x\in\mathbb{R}^N:x+z_{r}^1\in\Omega_r\}$. Then $v_r$ is bounded in $H^1(\mathbb{R}^N)$, and there is $v\in H^1(\mathbb{R}^N)$ such that $v_r\rightharpoonup v$ as $r\to\infty$. By the regularity theory of elliptic partial differential equations and $\liminf_{r\to\infty}\nu_{r}^1(z_{r}^1)\ge m_0> 0$, we deduce that $v(0)\ge m_0>0$. Assume without loss of generality that, up to a subsequence,
\[
\underset{r\to\infty}{\lim}\frac{z_{r}^1}{|z_{r}^1|}=e_1.
\]
Setting
\[
\Sigma=\Big\{x\in\mathbb{R}^N:x\cdot e_1<l\Big\}=\Big\{x\in\mathbb{R}^N:x_1<l\Big\},
\]
we have $\varphi(\cdot-z_{r}^1)\in C_c^\infty(\Omega_r)$ for any $\varphi\in C_c^\infty(\Omega_r)$ and $r$ large enough. It then follows that
\begin{eqnarray}\label{eq-varphi}
&&\int_{\Omega_r}\nabla \nu_{r}^1\nabla\varphi(\cdot-z_{r}^1)\mathrm{d}x
+\int_{\Omega_r}V\nu_{r}^1\varphi(\cdot-z_{r}^1)\mathrm{d}x
+\lambda_{r,\alpha_n}\int_{\Omega_r}\nu_{r}^1\varphi(\cdot-z_{r}^1)\mathrm{d}x\notag\\
&=&\int_{\Omega_r}g(\nu_{r}^1)\varphi(\cdot-z_{r}^1)\mathrm{d}x.
\end{eqnarray}
Since $|z_r^1|\to\infty$ as $r\to\infty$, we have
\begin{eqnarray}\label{V-vanish}
\Big|\int_{\Omega_r}V\nu_{r}^1\varphi(\cdot-z_{r}^1)\mathrm{d}x\Big|
&\le&\int_{\text{Supp}\varphi}\Big|V(\cdot+z_r^1)v_{r}\varphi\Big|\mathrm{d}x\notag\\
&\le&\|v_{r}\|_{2^*}\|\varphi\|_{2^*}
\bigg(\int_{\text{Supp}\varphi}|V(\cdot+z_r^1)|^{\frac{3}{2}}\mathrm{d}x\bigg)^{\frac{2}{3}}\\
&\le&\|v_{r}\|_{2^*}\|\varphi\|_{2^*}
\bigg(\int_{\mathbb{R}^N\setminus B_{\frac{|z_r^1|}{2}}}|V(\cdot+z_r^1)|
^{\frac{3}{2}}\mathrm{d}x\bigg)^{\frac{2}{3}}\to0\quad\text{as}\ r\to\infty\notag.
\end{eqnarray}
Letting $r\to\infty$ in \eqref{eq-varphi}, we obtain for $\varphi\in C_c^\infty(\Sigma)$:
\[
\int_{\Sigma}\nabla v\cdot\nabla\varphi\mathrm{d}x
+\lambda \int_{\Sigma}v\varphi\mathrm{d}x
=\int_{\Sigma}g(v)\varphi\mathrm{d}x.
\]
Thus $v\in H^1(\mathbb{R}^N)$ is a weak solution of the equation
\begin{equation}\label{eq-w-alpha-ge}
-\Delta v+\lambda v=g(v)\quad\text{in}\ \Sigma.
\end{equation}
Hence we obtain a nontrivial nonnegative solution of \eqref{eq-w-alpha-ge} on a half space, which
is impossible (see e.g., \cite{EL82}). This proves that dist$(z_r^1,\partial\Omega_r)\to\infty$ as $r\to\infty$. A similar argument as above shows that \eqref{eq-w-alpha-ge} holds for $\Sigma=\mathbb{R}^N$.
Thus
\begin{equation}\label{alpha-eq-j-ge}
\int_{\mathbb{R}^N}|\nabla v|^2\mathrm{d}x
+\lambda\int_{\mathbb{R}^N}|v|^2\mathrm{d}x
=\int_{\mathbb{R}^N}g(v)\mathrm{d}x.
\end{equation}
Thus $\omega_r^1:=\nu_r^1(\cdot+z_r^1)\rightharpoonup\omega^1$ in $H^1(\mathbb{R}^N)$, and after a calculation similar to \eqref{v-0}, we get that  $\omega^1$ satisfies
\[
-\Delta \omega^1+\lambda\omega^1=g(\omega^1)\quad\text{in}\ \mathbb{R}^{N}.
\]
Moreover, by \eqref{vrd},
\[
\int_{B_1}|\omega^1|^2\mathrm{d}x
=\underset{r\to\infty}{\lim}\int_{B_1}|\nu_r^1(x+z_r^1)|^2\mathrm{d}x
=\underset{r\to\infty}{\lim}\int_{B(z_r^1,1)}|\nu_r^1(x)|^2\mathrm{d}x>m>0.
\]
This implies $\omega^1\neq0$. Setting $\nu_r^2(x)=\nu_r^1(x)-\omega^1(x-z_r^1)$, then $\nu_r^2\rightharpoonup0$ as $r\to\infty$ in $H^1(\mathbb{R}^N)$.
\textbf{Case1:} If
\[
\underset{r\to\infty}{\limsup}\underset{z\in\mathbb{R}^N}{\sup}
\int_{B(z_r^1,1)}|\nu_r^2(x)|^2\mathrm{d}x=0
\]
occurs, then we stop and  go to \textbf{step 1}. Consequently the lemma holds for $l=1$.
\textbf{Case2:} If there exists $z_r^2\in B_r$ such that
\[
\int_{B(z_r^2,1)}|\nu_r^2(x)|^2\mathrm{d}x>m>0,
\]
we first \textbf{claim} that $|z_r^1-z_r^2|\to\infty$ as $r\to\infty$. We suppose by the contrary that there is $R>0$ such that $|z_r^1-z_r^2|<R$ for any large $r$. Then
\begin{eqnarray}
\int_{B(z_r^2,1)}|\nu_r^2(x)|^2\mathrm{d}x&\le&\int_{B(z_r^1,R+1)}|\nu_r^2(x)|^2\mathrm{d}x\\
&=&\int_{B(z_r^1,R+1)}|\nu_r^1(x)-\omega^1(x-z_r^1)|^2\mathrm{d}x\notag\\
&=&\int_{B(0,R+1)}|\omega_r^1(x)-\omega^1(x)|^2\mathrm{d}x\notag\\
&\to&0\quad\text{as}\ r\to\infty,
\end{eqnarray}
which is impossible. Then the claim is valid.  Base on this, similar to \eqref{dist} we further have that  dist$(z_r^2,\partial\Omega_r)\to\infty$ as $r\to\infty$, \eqref{zk-dist} holds, and there is $\omega^2\neq0$ satisfying \eqref{omega}. Let $\nu_r^3=\nu_r^2(x)-\omega^2(x-z_r^2)$, repeating the above process.

\textbf{Step3:} we try to prove that the process  in \textbf{Step2} can be repeated at most finitely many times. For any solution $\omega\neq0$ of \eqref{omega},
let us  first \textbf{claim} $\|\nabla\omega\|_2^2$ is  bounded from below. Indeed, from \eqref{omega}, we have
\[
\int_{\mathbb{R}^N}|\nabla \omega|^2\mathrm{d}x
+\lambda\int_{\mathbb{R}^N}|\omega|^2\mathrm{d}x
=\int_{\mathbb{R}^N}g(\omega)\omega\mathrm{d}x.
\]
This together with \eqref{G-compare}, \eqref{g-G} and  the Pohozaev identity
leads to
\begin{eqnarray*}
\frac{1}{N}\int_{\mathbb{R}^N}|\nabla \omega|^2\mathrm{d}x
&=&\frac{1}{2}\int_{\mathbb{R}^N}g(\omega)\omega\mathrm{d}x
-\int_{\mathbb{R}^N}G(\omega)\omega\mathrm{d}x\\
&\le&(\frac{\beta}{2}-1)C_2\int_{\mathbb{R}^N}|\omega|^{d}\mathrm{d}x\\
&\le&(\frac{\beta}{2}-1)C_{N,d}C_2c^{\frac{2d-N(d-2)}{2}}\bigg(\int_{\mathbb{R}^N}|\nabla \omega|^2\mathrm{d}x\bigg)^{\frac{N(d-2)}{4}}.
\end{eqnarray*}
Then $\|\nabla\omega\|_2^2$ is  bounded from below due to $2+\frac{4}{N}<d<2^*$.
We suppose that it can be repeated at most l times. Then
\begin{equation}
\underset{r\to\infty}{\lim}\|u_r\|_{H^1}\ge\|u_0\|_{H^1}
+\Sigma_{k=1}^{l-1}\|\omega^k(\cdot-z_r^k)\|_{H^1}
\end{equation}
If $l=\infty$, this contradicts with the fact that $u_r$ is bounded uniformly in $r$.
\end{proof}

\noindent\textbf{Proofs of Theorems \ref{beta>0-e<0-rn-}:}   This part follows a standard technique, but let us quickly explain for the reader¡¯s convenience.
Let $\{(\lambda_r,u_r)\}$ be a sequence of solutions of \eqref{main-eq-s-omega} with $\Omega_r=B_r$.
In view of Lemma \ref{rinftycom}, if $l=0$,  $u_r\to u_0$ as $r\to\infty$ in $H^1(\mathbb{R}^N)$, then Theorems \ref{beta>0-e<0-rn-}  hold. Next, we just need to state that the case $l>0$ is impossible  through a discussion similar to \cite{BQZ24}.

In fact, if $l>0$, without loss of generality, we assume that
$|z^1_r|\le\min\{|z_r^k|:k=1,...,l\}$.
Let
\[
\frac{|z_r^k-z_r^1|}{|z_r^1|}\to d_k\in\left[0,\infty\right],
\quad K=\{k:d_k\neq0\ \text{for}\ k=1,...,l\},\]
and set $v^*=\frac{1}{4}\min\{1,\rho,d^*\}$ with $d^*={\min}_{k\in K}d_k>0$. Fixed $\delta\in\left(0,v^*\right)$ and consider the annulus
\[
A_r=B\bigg(z_r^1,\frac{3}{2}\delta|z_r^1|\bigg)\setminus B\bigg(z_r^1,\frac{1}{2}\delta|z_r^1|\bigg).
\]
It is easy to verify
 that $\text{dist}(z_r^k,A_r)\ge\frac{1}{4}\delta|z_r^1|$ for any $k=1,...,l$ and large $r$.
 By Lemma \ref{rinftycom}, one has that  for any $2\le s< 2^*$,
\[
\|u_r\|_{L^s(A_r)}=\Big\|u_0+\Sigma_{k=1}^{l}\omega^k(\cdot-z_r^k)\Big\|_{L^s(A_r)}
\le\|u_0\|_{L^s(A_r)}+\Sigma_{k=1}^{l}\|\omega^k(\cdot-z_r^k)\|_{L^s(A_r)}.
\]
Then
\[\|u_0\|_{L^s(A_r)}=\int_{B(z_r^1,\frac{3}{2}\delta|z_r^1|)}|u_0|^s\mathrm{d}x
\le\int_{ B^c_{\delta|z_r^1|/2}}|u_0|^s\mathrm{d}x\to0\]
and
\[
\|\omega^k(\cdot-z_r^k)\|_{L^s(A_r)}
=\int_{A_r}|\omega^k(\cdot-z_r^k)|^s\mathrm{d}x
\le\int_{B^c_{\delta|z_r^1|/4}}|\omega^k|^s\mathrm{d}x\to0.
\]
As a result,  $\|u_r\|_{L^s(A_r)}\to0$ as $r\to\infty$. Observe that  $u_r$ satisfies
\begin{equation}\label{eq-la}
    -\Delta u_r+Vu_r+\lambda u_r\le g(u_r)\quad\text{in}\ \mathbb{R}^N
\end{equation}
for large r, where $\lambda:=\frac{1}{2}\liminf_{r\to\infty}\lambda_r>0$ due to Lemma \ref{rinftycom}. Setting
\[
R_m=\overline{B(z_r^1,\frac{3}{2}\delta|z_r^1|-m)}\setminus B(z_r^1,\frac{1}{2}\delta|z_r^1|+m)\quad\text{for}\ n\in\mathbb{N}^+,
\]
by \cite[Theorem 8.17]{GT83}, one has that  there is constant $C>0$ independent of r such that for large r,
\begin{equation}\label{lambda}
C_2\|u_r\|_{L^\infty(R_1)}^{d-2}\le C\|u_r\|_{L^2(A_r)}^{d-2}<\frac{\lambda}{4}.
\end{equation}
Let us set $\phi_m=\xi_m(|x-z_r^1|)$, where $\xi_m\in C_c^\infty(\mathbb{R},\left[0,1\right])$ is a cut-off function with  $|\xi_m'(t)\le4|$ for any $t\in R$, and
\[
\xi_m(t)=
\begin{cases}
    1&\text{if}\ \frac{1}{2}\delta|z_r^1|+m<t<\frac{3}{2}\delta|z_r^1|-m,\\
    0&\text{if}\ t<\frac{1}{2}\delta|z_r^1|+m-1\ \text{or}\  t>\frac{3}{2}\delta|z_r^1|-m+1.
\end{cases}
\]
Testing \eqref{eq-la} with $\phi_m^2u_r$, we have
\begin{eqnarray}\label{text-right}
   &&8\int_{R_{m-1}\setminus R_{m}}|u_r|\cdot|\nabla u_r|\mathrm{d}x\notag\\
   &\ge& \int_{R_{m-1}}|\nabla u_r|^2\phi_m^2\mathrm{d}x
    +\int_{R_{m-1}}V| u_r|^2\phi_m^2\mathrm{d}x
    +\lambda\int_{R_{m-1}}| u_r|^2\phi_m^2\mathrm{d}x\notag\\
    &&\quad-\int_{R_{m-1}}g(u_r)u_r\phi_m^2\mathrm{d}x.
\end{eqnarray}
Since $|z_r^1|\to\infty$ and $\lim_{|x|\to\infty}V(x)=0$ there exists $\bar{r}$ such that $V(x)\ge-\frac{\lambda}{4}$ for any  $x\in A_r$ with $r>\bar{r}$. This together with \eqref{lambda}, we get that
\begin{eqnarray}\label{text-left}
    &&\min\big\{1,\lambda/2\big\}\bigg(\int_{R_{m-1}}|\nabla u_r|^2\phi_m^2\mathrm{d}x+\int_{R_{m-1}}| u_r|^2\phi_m^2\mathrm{d}x\bigg)\notag\\
    &\le&\int_{R_{m-1}}|\nabla u_r|^2\phi_m^2\mathrm{d}x+\lambda/2\int_{R_{m-1}}| u_r|^2\phi_m^2\mathrm{d}x\notag\\
    &\le& \int_{R_{m-1}}|\nabla u_r|^2\phi_m^2\mathrm{d}x
    +\int_{R_{m-1}}V| u_r|^2\phi_m^2\mathrm{d}x
    +\lambda\int_{R_{m-1}}| u_r|^2\phi_m^2\mathrm{d}x\notag\\
    &&\quad-\int_{R_{m-1}}g(u_r)u_r\phi_m^2\mathrm{d}x.
\end{eqnarray}
It follows from \eqref{text-right}-\eqref{text-left} that
\[\int_{R_{m}}|\nabla u_r|^2\phi_m^2\mathrm{d}x+\int_{R_{m}}| u_r|^2\phi_m^2\mathrm{d}x\le\kappa\bigg(\int_{R_{m}\setminus R_{m-1}}|\nabla u_r|^2\phi_m^2\mathrm{d}x+\int_{R_{m}\setminus R_{m-1}}|u_r|^2\phi_m^2\mathrm{d}x\bigg),\]
where $\kappa=4\max\{1,\frac{2}{\lambda}\}$.
From this, one has that
\[
a_m\le\theta a_{m-1}\le \theta^m\underset{r}{\max}\|u_r\|_{H^1}=\big(\underset{r}{\max}\|u_r\|_{H^1}\big)e^{mln\theta}=:\tilde{A}e^{mln\theta},
\]
where \[
a_m:=\int_{R_{m}}|\nabla u_r|^2\mathrm{d}x+\int_{R_{m}}| u_r|^2\mathrm{d}x\quad\text{and}\quad \theta=\frac{\kappa}{\kappa+1}.
\]
Let
\[
D_r:=B(z_r^1,\delta|z_r^1|+1)\setminus B(z_r^1,\delta|z_r^1|-1)\subset R_M
\]
with
$M=\bigg[\frac{\delta|z_r^1|}{2}\bigg]-1.$
We further have
\[\int_{D_r}|\nabla u_r|^2\mathrm{d}x+\int_{D_r}| u_r|^2\mathrm{d}x
\le \tilde{A}e^{Mln\theta}\le\tilde{A}e^{\frac{\delta|z_r^1|}{2}ln\theta}.\]
It follows from  \cite[Theorem 8.17 ]{GT83} that
\begin{equation}\label{e-u}
|u_r(x)|\le C\|u_r\|_{L^2(D_r)}\le A e^{-c|z_r^1|}
\end{equation}
for any $x\in D_{r,\frac{2}{3}}:=\delta|z_r^1|-\frac{2}{3}<|x-z_r^1|<\delta|z_r^1|+\frac{2}{3}$ and large r; here A, c, C
are independent of r.
From the $L^p$-estimates of elliptic partial differential equations \cite[Theorem 9.11]{GT83},  we further conclude that
\begin{equation}\label{e-nabla-u}
    |\nabla u_r(x)|\le A e^{-c|z_r^1|}
\end{equation}
for any $x$ with $\delta|z_r^1|-\frac{1}{2}<|x-z_r^1|<\delta|z_r^1|+\frac{1}{2}$.

With \eqref{e-u}-\eqref{e-nabla-u} in hand,  let $\Gamma_1=\partial B(z_r^1,\delta|z_r^1|)\cap B_r$ and $\Gamma_2=B(z_r^1,\delta|z_r^1|)\cap\partial B_r$. Multiplying both sides
of \eqref{main-eq-omega} with $z_r^1\cdot\nabla u_r$ and integrating, we obtain
\begin{eqnarray}\label{a-a2}
    \frac{1}{2}\int_{B(z_r^1,\delta|z_r^1|)\cap B_r}(z_r^1\cdot\nabla V)u_r^2\mathrm{d}\sigma&=&\frac{1}{2}\int_{\Gamma_1\cup\Gamma_2}(z_r^1\cdot\textbf{n})|\nabla u_r|^2\mathrm{d}\sigma\notag\\
    &&\quad-\int_{\Gamma_1\cup\Gamma_2}(\textbf{n}\cdot\nabla u_r)(z_r^1\cdot\nabla u_r)\mathrm{d}\sigma\notag\\
    &&\quad-\int_{\Gamma_1\cup\Gamma_2}(z_r^1\cdot\textbf{n})
    (\frac{Vu_r^2}{2}-G(u_r)+\frac{\lambda|u_r|^2}{2})\mathrm{d}\sigma\notag\\
    &=:&\mathcal{A}_1+\mathcal{A}_2,
\end{eqnarray}
where $\textbf{n}$ is the outward unit normal vector on $\partial{B(z_r^1,\delta|z_r^1|)\cap B_r}$,
\begin{eqnarray*}
    \mathcal{A}_i:&=&\frac{1}{2}\int_{\Gamma_i}(z_r^1\cdot\textbf{n})|\nabla u_r|^2\mathrm{d}\sigma\notag\\
    &&\quad-\int_{\Gamma_i}(\textbf{n}\cdot\nabla u_r)(z_r^1\cdot\nabla u_r)\mathrm{d}\sigma\notag\\
    &&\quad-\frac{1}{2}\int_{\Gamma_i}(z_r^1\cdot\textbf{n})
    (\frac{Vu_r^2}{2}-G(u_r)+\frac{\lambda|u_r|^2}{2})\mathrm{d}\sigma
\end{eqnarray*}
for $i=1,2$.
As we know $z_r^1\cdot\textbf{n}(x)>0$ for any $x\in\Gamma_2\neq\emptyset$ by the choice of $\delta$.
From the fact that
\[
u_r(x)>0,\quad \text{for any }x\in B_r(x),\  \text{and } u_r(x)=0\ \text{for any }x\in\partial B_r(x),
\]
we deduce $\nabla u_r(x)=-|\nabla u_r(x)|\textbf{n}(x)=-|\nabla u_r(x)|\frac{x}{|x|}$ for any  $x\in\Gamma_2\subset\partial B_r$, which leads to
\begin{equation}\label{a2-le0}
\mathcal{A}_2=-\frac{1}{2}\int_{\Gamma_2}(z_r^1\cdot\textbf{n})|\nabla u_r|^2\mathrm{d}\sigma\le0.
\end{equation}
In view of \eqref{e-u}-\eqref{e-nabla-u},  there is $\tau_0>0$ independent of r such that
\[
\underset{r\to\infty}{\limsup} e^{\tau_0|z_r^1|}\mathcal{A}_1=0.
\]
But it follows from Lemma \ref{rinftycom} that
\[
\underset{r\to\infty}{\limsup}\int_{B(z_r^1,\delta|z_r^1|)\cap B_r}u_r^2\mathrm{d}x\ge\|\omega^1\|_2^2>0.
\]
Moreover, the definition of $\delta$ and the assumption $(V_1)$ imply
\begin{eqnarray*}
    &&\underset{r\to\infty}{\lim}
e^{\tau_0|z_r^1|}\int_{B(z_r^1,\delta|z_r^1|)\cap B_r}(z_r^1\cdot\nabla V)u_r^2\mathrm{d}x\\
& \ge& \underset{r\to\infty}{\lim}
    e^{\tau_0|z_r^1|}\bigg(\underset{x\in B(z_r^1,\delta|z_r^1|)}{\inf}(z_r^1\cdot\nabla V)\bigg)\int_{B(z_r^1,\delta|z_r^1|)\cap B_r}u_r^2\mathrm{d}x\\
    &>&0.
\end{eqnarray*}
This contradicts \eqref{a-a2}-\eqref{a2-le0}. The proof is now complete.

		\hfill$\Box$

		\vspace{2cm}
	
\noindent\textbf{Acknowledgement}\ \ 

\bibliographystyle{amsplain}

\end{document}